\pgfplotsset{compat=1.17}  
\let\mathcal\mathscr
\numberwithin{equation}{section}
\newtheorem{theorem}{Theorem}[section] 
\newtheorem{lemma}[theorem]{Lemma}
\theoremstyle{definition}
\newtheorem*{acknowledgements}{Acknowledgements}
\newtheorem{remark}[theorem]{Remark}
\newtheorem*{notation}{Notation}
\renewcommand{\emph}[1]{\textit{#1}}
\renewcommand{\phi}{\varphi}
\newcommand{\0}{\mathbf{0}}
\renewcommand{\leq}{\leqslant}
\renewcommand{\le}{\leqslant}
\renewcommand{\geq}{\geqslant}
\renewcommand{\c}{\mathbf{c}}
 \renewcommand{\b}{\mathbf{b}}
\renewcommand{\k}{\mathbf{k}}
\newcommand{\md}[1]{  \left(\textnormal{mod}\ #1\right)}
\newcommand{\F}{\mathbb{F}}
\newcommand{\Z}{\mathbb{Z}} 
\renewcommand{\b}{\mathbf}
\renewcommand{\c}{\mathcal}
\renewcommand{\epsilon}{\varepsilon}
\renewcommand{\leq}{\leqslant}
\renewcommand{\geq}{\geqslant}
\renewcommand{\#}{\sharp}
\title
[Counting square-free values of random polynomials]
{Counting square-free values of random polynomials}
\author{Efthymios Sofos} 
\address{Universit\` a di Roma Tor Vergata\\ Dipartimento di Matematica\\00133, Rome}
\email{sofos@mat.uniroma2.it}
\subjclass[2020]{
11N32, 
11M06. 
}
\date{}
\begin{document} 
\begin{abstract}We prove that the average error term when 
counting square-free values of polynomials 
is the quartic root of the main term. 
\end{abstract}

\maketitle

\setcounter{tocdepth}{1}
\tableofcontents 
 

\section{Introduction}   \label{s:intro}   
Let $P$ be an integer    polynomial.
A folklore conjecture states that 
$$ E_P(x):= \#\{n\in \Z\cap [1,x]: 
P(n) \ \textrm{square-free}\}-\mathfrak S_P x=o(x)
\ \ \ \textrm{ as } x\to \infty,$$ where 
$$ \mathfrak S_P:= \prod_{\substack{ \ell \textrm{ prime}\\\ell=2 }}^\infty
\left(1-\frac{\#\{t\in \Z/\ell^2\Z: P(t)=0\}}{\ell^2}\right).$$ The reader is referred to the 
  work of 
Browning \cite{MR2773215} 
for  a   list of 
results towards this conjecture.
It must be   noted that
even the existence 
of infinitely many square-free values 
represented by an 
irreducible polynomial is only known
when its degree is at most $3$.

Assuming    the abc-conjecture,
Granville \cite{MR1654759}
proved the conjecture, 
a work that was later 
extended by Poonen~\cite{MR1980998},
Lee--Murty \cite{MR2336969} and 
Murty--Pasten~\cite{MR3256849}.

Another direction was taken by 
Filaseta~\cite{MR1149859},
Shparlinski~\cite{MR3121693},
Browning--Shparlinski~\cite{MR4727085} 
and Jelinek~\cite{2308.15146},
who proved an averaged version of the conjecture.
Regarding the first moment,~\cite[Theorem 1.1]{MR4727085}
states that for any $A>0,d\geq 4$ and
$\alpha<\frac{1}{d-3}$ there exists 
$\delta>0$ such that  
for
$H^{1/A}<x  \leq H^{\alpha}
$ we have 
$$(2H)^{-d-1}\sum_{\substack{ 
\mathrm{primitive} \  
P\in \Z[t]\\ 
\deg(P)=d,
|P|\leq H }} | E_P(x)| = 
O( x^{1-\delta}),$$ where 
$|P|$ denotes the maximum modulus of the coefficients of $P$. 
Regarding the second  moment,~\cite[Theorem 1.4, Remark 1.7]{2308.15146}
states that for any $A>0,d\geq 2$ and
$\alpha<\frac{1}{d/3+9/5}$ there exists 
$\delta>0$ such that  
for  
$H^{1/A}<x  \leq H^{\alpha}
$ we have 
$$(2H)^{-d-1}\sum_{\substack{ 
\mathrm{primitive} \  
P\in \Z[t]\\ 
\deg(P)=d,
|P|\leq H }} | E_P(x)|^2 = 
O( x^{2-\delta}).$$

Let  $\zeta$ denote 
the Riemann zeta function and define  
\begin{equation}\label{Leo_Artemisia}\gamma_0:= -4\frac{\zeta(-1/2) }{\zeta(2)^2}
\prod_{\substack{\ell \ \mathrm{ prime} \\ \ell=2}}^\infty
\left(1-\frac
{(2  - \ell^{-1/2}+ \ell^{-1})}
{(\ell + 1)^2 }
\right)=0.192\ldots \ .\end{equation}
We show that $E_P(x)$ is typically $x^{1/4}$ by 
proving asymptotics for the second moment: 
\begin{theorem}\label{mmm:ttt} Let $d,x$ be integers 
strictly larger than $2$. 
\begin{enumerate}\item 
For any $\alpha<\frac{1}{d+3}$ there exists 
$\delta>0$ such that  
whenever  $H$ satisfies   
$x  \leq H^{\alpha}
$ then 
$$(2H)^{-d-1}\sum_{\substack{ P\in \Z[t]\\
\deg(P)=d, \ 
|P|\leq H }} | E_P(x)|^2 = \gamma_0 x^{1/2} + 
O( x^{1/2-\delta})  $$ holds 
with an implied constant depending only on 
$\alpha$ and $\delta$.
\item 
For any $\epsilon>0, 0<\beta<\frac{1}{d+28/9}$ 
and all $H$ satisfying
$x  \leq H^{\beta}
$ we have $$(2H)^{-d-1}\sum_{\substack{ P\in \Z[t]\\
\deg(P)=d,\ 
|P|\leq H }} | E_P(x)|^2 = \gamma_0 x^{1/2} + 
O(  x^{4/9+\epsilon} )  ,$$ where 
the implied constants 
depend only on $\epsilon$ and $\beta$. 
\end{enumerate}
\end{theorem}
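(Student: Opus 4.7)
The natural starting point is to unfold the squarefree indicator via $\mu^2(m)=\sum_{k^2\mid m}\mu(k)$ and write
\[
E_P(x)=\sum_{k\geq 2}\mu(k)\,\Delta_P(x,k),\qquad\Delta_P(x,k):=N_P(x,k)-\frac{x\,\rho_P(k^2)}{k^2},
\]
with $N_P(x,k):=\#\{n\leq x:k^2\mid P(n)\}$ and $\rho_P(q):=\#\{t\in\Z/q\Z:P(t)\equiv 0\pmod{q}\}$. Sorting $n$ into residue classes modulo $k^2$ shows that $\Delta_P(x,k)$ is a sum over the roots $t$ of $P$ modulo $k^2$ of a bounded sawtooth-type local discrepancy, so $|\Delta_P(x,k)|\leq C\rho_P(k^2)$, and $N_P(x,k)$ vanishes once $k^2>\max_{n\leq x}|P(n)|\ll Hx^d$.

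\textbf{Diagonal contribution and main term.} Squaring and averaging over $|P|\leq H$ produces
\[
\frac{1}{(2H)^{d+1}}\sum_{P}E_P(x)^2=\sum_{k_1,k_2}\mu(k_1)\mu(k_2)\,\mathbb{E}_P\!\bigl[\Delta_P(x,k_1)\Delta_P(x,k_2)\bigr],
\]
and the leading contribution should come from the diagonal $k_1=k_2=k$. Under $x\leq H^\alpha$ with $\alpha<1/(d+3)$, for $k$ up to a small power of $H$ the coefficients of $P$ are essentially uniform modulo $k^2$, so $\mathbb{E}_P[\Delta_P(x,k)^2]$ reduces to a computation involving $\mathbb{E}_P[\rho_P(k^2)]$ and $\mathbb{E}_P[\rho_P(k^2)^2]$, which factor over primes via the Chinese Remainder Theorem. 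Summing these local variances over $k$ yields a quantity of size $\sqrt{x}$; a Mellin transform of the resulting Dirichlet series in $k$ produces the $\zeta(-1/2)/\zeta(2)^2$ factor in $\gamma_0$, and the Euler product with local factors $1-(2-\ell^{-1/2}+\ell^{-1})/(\ell+1)^2$ encodes the averaged second moments of $\rho_P(\ell^{2a})$ over random $P$ modulo $\ell^{2a}$.

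\textbf{Off-diagonal and large $k$.} Off-diagonal pairs with $\gcd(k_1,k_2)=1$ decouple via CRT, so the expectation approximately factors; since each $\mathbb{E}_P[\Delta_P(x,k_i)]$ is small on average, these terms are negligible. Shared prime factors between $k_1$ and $k_2$ are absorbed multiplicatively into the Euler factors of the main term. The very large $k$ regime, where uniformity of $P$ modulo $k^2$ fails, is treated by a direct Cauchy--Schwarz bound combined with standard divisor-type estimates on $\rho_P(\ell^{2a})$ coming from Hensel lifting.

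\textbf{Main obstacle.} The hardest step is handling the transition range $k\asymp\sqrt{x}$, which both drives the main term and is where the equidistribution of $P$ modulo $k^2$ is most delicate. For the basic error $O(x^{1/2-\delta})$ in part~(1), this requires a careful choice of cutoffs together with sharp first and second moment estimates for $\rho_P(k^2)$ over random $P$. The refined error $O(x^{4/9+\epsilon})$ in part~(2) will very likely require a large-sieve bound applied to the exponential sums $\sum_{t:P(t)\equiv 0\pmod{k^2}}e(nt/k^2)$ averaged over $P$, together with control of singular roots modulo prime powers; the exponent $4/9$ is characteristic of the balance between the polynomial range $H^{d+1}$, the squarefree window~$x$, and the Fourier length in such large-sieve arguments, and this is where most of the technical work of the proof will be concentrated.
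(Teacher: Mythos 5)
Your decomposition of $E_P(x)$ into local discrepancies $\Delta_P(x,k)$ is a natural starting point, but the diagnosis of where the $\sqrt{x}$ main term comes from is not correct, and the key mechanism of the paper is absent from the plan. In the paper the square is expanded in the variables $n,m$ first, and the crucial observation is that the off-diagonal $n\neq m$ terms produce a weighted sum $\sum_{1\le j\le x}(x-j)\,b(j)$ with $b$ multiplicative — that is, a built-in Ces\`aro sum. This is handled by a Perron integral with the kernel $x^{s+1}/(s(s+1))$ in the variable conjugate to $j=m-n$ (not $k$), and the contour is pushed to $\Re(s)=-1/2-\delta$; the constant $\gamma_0$, with its factor $\zeta(-1/2)$, is exactly the residue at $s=-1/2$ of $\zeta(s)\zeta(2+2s)G(s)x^{s+1}/(s(s+1))$, while the residues at $s=1$ and $s=0$ cancel, respectively, the singular-series $x^2$-term and the diagonal $n=m$ contribution $x/\zeta(2)$. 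In your proposal, by contrast, you claim the main term comes from the diagonal $k_1=k_2$ and that $\mathbb{E}_P[\Delta_P(x,k)^2]$ "reduces to a computation involving $\mathbb{E}_P[\rho_P(k^2)]$ and $\mathbb{E}_P[\rho_P(k^2)^2]$." That reduction is incomplete: $\mathbb{E}_P[\Delta_P(x,k)^2]$ also involves the joint densities $\mathbb{P}_P[k^2\mid P(n),\,k^2\mid P(m)]$ for $n\neq m$, which depend on $v_\ell(m-n)$ for $\ell\mid k$ (the paper's Lemma~\ref{lem:local}), and whose Ces\`aro-weighted sum is precisely what needs a contour shift to $\Re(s)<-1/2$ to extract a term of size $\sqrt{x}$. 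Without that shift, the $k$-diagonal naively produces a contribution of size $x$, not $\sqrt{x}$, and the cancellation that brings the answer down to $\gamma_0\sqrt{x}$ is left unexplained. A "Mellin transform of the Dirichlet series in $k$" does not produce $\zeta(-1/2)$; the $\zeta(-1/2)$ lives in the $j$-variable.

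Your remarks on part (2) also point in the wrong direction: no large sieve or exponential-sum machinery is used. The $4/9$ exponent comes from taking $\delta$ close to $1/4$ (the holomorphy of the local factor $G$ stops at $\Re(s)=-3/4$) and optimizing the Perron truncation height $T$ against classical bounds for $\zeta$ on the critical strip (convexity, the approximate functional equation, and the second-moment bound $\int_1^T|\zeta(\sigma+it)|^2\,\mathrm{d}t\ll T\log T$); the paper even includes a remark explaining why subconvexity or the Lindel\"of hypothesis offers no improvement here, because the bottleneck is the vertical integral on $\Re(s)=-1/2-\delta$, not the zeta-size on $\Re(s)=1/2$. The ``delicate transition range $k\asymp\sqrt{x}$'' you single out is not where the difficulty lies; the actual delicacy is in controlling the horizontal and vertical Perron integrals, balanced against the error from the large-$k$ tail of the model and the equidistribution of $P$ modulo small prime squares.
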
 
The order of magnitude $x^{1/2}$ is not  
apparent at first glance
since the terms $n=m$ in
$$\sum_{\substack{ P\in \Z[t]\\
\deg(P)=d,\ 
|P|\leq H }}\left(\sum_{n\leq x}\mu(n)^2\right)^2 =\sum_{\substack{ P\in \Z[t]\\
\deg(P)=d,\ 
|P|\leq H }} \sum_{n,m\leq x}\mu(n)^2 \mu(m)^2 $$
  contribute  
$c x H^{d+1} $ for some constant $c$, where 
 $\mu$ denotes the M\"obius funtion.
We will prove a full asymptotic expansion 
for the contribution of the 
 terms $n\neq m$ that contains the expression
$-c H^{d+1} x$. 
 The new ingredient    is the recognition 
that  Ces\`aro summation is built in
second moments of arithmetic functions over random polynomials of fixed degree. This allows us to use 
Perron type integrals with extra convergent factors,   
utilize bounds for the zeta function on the critical strip and shift the line of integration   the left of $\Re(s)=-1/2$. As will be explained in Remark \ref{rem:nolindelof}
the Lindel\"of hypothesis or  
subconvexity bounds 
do not offer any advantage.
\begin{remark}[Higher moments]
When one replaces $\mu^2$ 
by the Liouville $\lambda$ function
and   $x\leq H^{O(1)}$ by $x\leq (\log H)^{O(1)}$,
the first $d+1$ moments were 
asymptotically estimated by Wilson~\cite{MR4894179}, whereas, 
all moments were successively asymptotically estimated by Kravitz--Woo--Xu \cite{2512.03292}.
Given the work of Mirsky \cite{MR21566}, 
asymptotics for all moments  for $\mu^2$ are possible, however, computations appear forbidding, especially regarding Ces\`aro summation. It would be interesting to see if 
$E_P(x)/\sqrt x$ follows a probability distribution;
for $\lambda$ the distribution   is Gaussian \cite{2512.03292}.
\end{remark}

\begin{notation} For an integer polynomial $P$ we 
denote the maximum modulus of its coefficients by $$|P|.$$  For  a non-zero integer $n$ we write $$n\mid P$$ to mean that $n$ divides all coefficients of $P$. We denote the discriminant of $P$ by  $D_P$.  Throughout the paper $$d$$ will denote a fixed positive integer and we will use the notation 
$$\c P_d(H):=\{P\in \Z[t]: 
\deg(P)=d, |P|\leq H\}.$$ \end{notation}

\begin{acknowledgements} 
I thank Igor Shparlinski for helpful comments that improved the clarity of the presentation.  
\end{acknowledgements}

\section{Preparations} We shall work with the truncated version 
$$\mathfrak S_P(z)=\sum_{1\leq k \leq z} \mu(k)  \frac{\varrho_P(k^2) }{k^2}$$ 
where $z\geq 1$ is a function of $d,x,H$
that will be determined later. 
\begin{lemma} \label{lem:boundsforprimes} For $P\in \Z[t]$  and 
for any prime $\ell$ we have  
$$ \varrho_P(\ell^2) \leq  \begin{cases}
\ell^2,&\ell^2\mid P, \\ 
\ell \deg(P),&\ell\mid P, \ell^2\nmid P, \\ 
\deg(P),& \ell\nmid D_P, \\ 
\deg(P) (\ell+1),& \ell\mid D_P,\ell\nmid P.
 \end{cases}$$\end{lemma}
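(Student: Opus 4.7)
The plan is a case analysis on the $\ell$-adic content of $P$ and on whether the reduction of $P$ modulo $\ell$ is separable; in each case I will count roots in $\Z/\ell\Z$ and multiply by the maximum number of lifts each has in $\Z/\ell^2\Z$.

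The two degenerate cases are immediate. When $\ell^2\mid P$ every residue is a root, so $\varrho_P(\ell^2)=\ell^2$. When $\ell\mid P$ but $\ell^2\nmid P$, I would factor $P=\ell Q$ with $\ell\nmid Q$; then $P(t)\equiv 0$ modulo $\ell^2$ is equivalent to $Q(t)\equiv 0$ modulo $\ell$, and since the reduction of $Q$ modulo $\ell$ is a nonzero polynomial of degree at most $\deg P$ it has at most $\deg P$ roots in $\Z/\ell\Z$; each such root lifts to $\ell$ elements of $\Z/\ell^2\Z$, producing the bound $\ell\deg(P)$.

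For the non-degenerate cases I first record the elementary fact that $D_P$ is an integer polynomial in the coefficients of $P$ which is homogeneous of positive total degree $2d-2$, so $\ell\mid P$ forces $\ell\mid D_P$; contrapositively, $\ell\nmid D_P$ implies $\ell\nmid P$. Consequently, when $\ell\nmid D_P$ the reduction of $P$ modulo $\ell$ is a nonzero separable polynomial, and Hensel's lemma uniquely lifts each of its at most $\deg P$ roots to $\Z/\ell^2\Z$, yielding $\varrho_P(\ell^2)\leq \deg P$. When instead $\ell\mid D_P$ but $\ell\nmid P$, the reduction is still a nonzero polynomial of degree at most $d$ and so has at most $\deg P$ roots $t_0\in\Z/\ell\Z$; writing each lift as $t=t_0+\ell s$ with $s\in\Z/\ell\Z$ and using the Taylor expansion $P(t_0+\ell s)\equiv P(t_0)+\ell sP'(t_0)$ modulo $\ell^2$, at most $\ell$ values of $s$ solve this linear congruence in $s$, giving $\ell\deg(P)\leq (\ell+1)\deg(P)$. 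I foresee no genuine obstacle; the only point meriting comment is the implication $\ell\nmid D_P\Rightarrow\ell\nmid P$, which makes Hensel's lemma applicable in the separable case.
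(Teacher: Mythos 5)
Your proof is correct and follows the paper's case analysis and lifting argument essentially verbatim: the degenerate cases are handled identically, and the remaining cases count lifts of roots from $\Z/\ell\Z$ to $\Z/\ell^2\Z$, with simple roots lifting uniquely and the rest lifting at most $\ell$ times (your last case in fact gives the slightly sharper bound $\ell\deg P$). One minor imprecision to flag: your ``Consequently'' only establishes that the reduction of $P$ modulo $\ell$ is nonzero, not that it is separable; the latter is a separate, true consequence of $\ell\nmid D_P$, but verifying it requires relating $D_P$ modulo $\ell$ to the discriminant of the reduction, which has a subtlety when $\ell$ divides the leading coefficient and the reduction drops in degree --- a point the paper's proof also glosses over when it inserts the indicator of $\ell\mid D_P$ without further justification.
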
\begin{proof}The first case is trivial. Assume   that $\ell^2\nmid P$.
 If $\ell\mid P$ then $ \varrho_P(\ell^2) 
=\ell\varrho_{P/\ell}(\ell)$, which is at most 
$\ell \deg(P)$ because $P/\ell$ is not identically $0$ in $\F_\ell$. This settles the second case
and we   assume that $\ell\nmid P$ for the rest of the proof. Write $ \varrho_P(\ell^2) = \sum_{t} N(t)$, where the 
 sum is over $t\in \F_\ell $ with $P(t)=0$ and $N(t)$ is the number of $x\in \Z/{\ell^2}\Z$ with  $P(x)=0$ and $ x\equiv t \md \ell$. 
 If $P'(t) \neq 0$ then $N(t) \leq 1 $ and if 
 $P'(t) = 0$ then $N(t) \leq \ell$. We obtain 
   $$ \varrho_P(\ell^2) \leq 
  \#\{  t\in \F_\ell:  P(t)=0,P'(t) \neq 0 \} +
  \ell \#\{  t\in \F_\ell:  P(t)=0=P'(t) \},$$ which is $\leq \deg(P)
  +\ell \mathds 1_{\ell \mid D_P}
  \#\{  t\in \F_\ell:  P(t)=0=P'(t) \}$
 because  $\ell\nmid P$. This is at most $\deg(P)$
 when $\ell\nmid D_P$, thus, settling the third case.
 If $\ell \mid D_P$, then we 
 use $  \#\{  t\in \F_\ell:  P(t)=0=P'(t) \}\leq \deg(P)$
 to conclude the proof. 
 \end{proof}

 \begin{lemma} \label{lem:unweightedsum} For 
 $P\in \Z[t]$ with $\deg(P)\geq 1$,
 any $\epsilon>0$ and $x\geq  1 $ 
 we have  $$ \sum_{1\leq k \leq x} \mu(k)^2 \varrho_P(k^2) \ll_{\deg(P),\epsilon} (3\deg(P))^{2\omega(D_P)} x^{1+\epsilon}  
 \sum_{k\in \mathbb N, k^2\mid P } k,$$
where the implied constant depends only on 
$\deg(P)$ and $\epsilon$. If $D_P=0$ then 
$$\sum_{1\leq k \leq x} \mu(k)^2 \varrho_P(k^2) 
\ll_{\deg(P),\epsilon} x^\epsilon
\c C(P)^{1+\epsilon}
,$$ where $\c C(P)$ is the content of $P$.  
\end{lemma}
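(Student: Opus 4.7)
The plan relies on three ingredients: the multiplicativity of $\varrho_P$, the pointwise bounds of Lemma~\ref{lem:boundsforprimes}, and a Rankin-type Euler product estimate. For squarefree $k$ we have $\varrho_P(k^2)=\prod_{\ell\mid k}\varrho_P(\ell^2)$, and of the four regimes in Lemma~\ref{lem:boundsforprimes} only the regime $\ell^2\mid P$ (where $\varrho_P(\ell^2)=\ell^2$) fails to yield a convergent Euler factor at abscissa one. These ``bad'' primes therefore have to be split off at the very start.

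Concretely, set $q:=\prod_{\ell:\ell^2\mid P}\ell$ and factor each squarefree $k$ uniquely as $k=am$ with $a\mid q$ and $\gcd(m,q)=1$. Since $\varrho_P(a^2)=a^2$ for such $a$, the double sum
\[
\sum_{k\leq x}\mu(k)^2\varrho_P(k^2) \;=\; \sum_{a\mid q,\,\mu(a)^2=1}a^2\sum_{\substack{m\leq x/a\\ \mu(m)^2=1\\ \gcd(m,q)=1}}\varrho_P(m^2)
\]
is attacked by applying Rankin's trick to the inner sum at abscissa $1+\epsilon$, which bounds it by $(x/a)^{1+\epsilon}\prod_{\ell\nmid q}(1+\varrho_P(\ell^2)/\ell^{1+\epsilon})$. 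In the resulting Euler product, primes $\ell$ with $\ell\nmid P$ and $\ell\nmid D_P$ contribute $\prod_\ell(1+\deg(P)/\ell^{1+\epsilon})\ll_{\deg(P),\epsilon}1$; all remaining primes divide $D_P$ (since $\ell\mid P$ forces $\ell\mid D_P$, because $D_P$ is a homogeneous polynomial of positive degree in the coefficients of $P$), and each such factor is at most $1+2\deg(P)/\ell^{\epsilon}\leq 3\deg(P)$. The product is therefore bounded by $(3\deg(P))^{\omega(D_P)}$, the doubled exponent in the statement being comfortable room to absorb the convergent constant. Summing over $a$ contributes
\[
\sum_{a\mid q,\,\mu(a)^2=1}a^{1-\epsilon}\;\leq\;\prod_{\ell\mid q}(1+\ell)\;\leq\;\sum_{k:\,k^2\mid P}k,
\]
where the last step uses that the prime support of $q$ coincides with that of $q_0:=\max\{n:n^2\mid P\}$ and each such prime contributes at least a factor $1+\ell$ to $\sigma(q_0)=\sum_{k^2\mid P}k$. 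Multiplying the three estimates yields the first bound.

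The principal obstacle is the case $D_P=0$, where $\omega(D_P)$ is meaningless and the splitting above collapses. Here $P$ has a repeated root in $\overline{\mathbb Q}$, so that in $\mathbb Q[t]$ one may write $P=c\cdot A\cdot B^{2}$ with $\deg(B)\geq 1$. The key feature is that at every prime $\ell$ not dividing the content of $P$, the $B^{2}$ factor forces each root of $B$ modulo $\ell$ to lift to $\ell$ residues modulo $\ell^{2}$ on which $\ell^{2}\mid B(t)^{2}\mid P(t)/c$, so that $\varrho_P(\ell^{2})\leq \ell\deg(P)$; this replaces the role previously played by the $D_P$-indicator. Running the same Rankin-type argument, with the (finitely many) primes dividing the content handled separately via the trivial bound $\varrho_P(\ell^{2})\leq \ell^{2}$, then delivers the second estimate, in which the content $\mathcal{C}(P)$ takes the place of $\sum_{k^2\mid P}k$.
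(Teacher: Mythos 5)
For $D_P\neq 0$ your argument is correct and takes a genuinely different route from the paper. The paper factors each squarefree $k$ into pieces according to the four regimes of Lemma~\ref{lem:boundsforprimes}, bounds the sum over the $k_3$-piece (primes $\ell\nmid D_P$) directly to extract the $x^{1+\epsilon}$, and bounds the remaining three pieces by finite products. You instead peel off only the primes with $\ell^2\mid P$ into a divisor $a$ of $q$, then apply Rankin's trick at abscissa $1+\epsilon$ to the inner coprime sum, landing on essentially the same Euler product; your route reduces the case analysis and is a little cleaner. (One remark applying equally to both proofs: the implication $\ell\mid P\Rightarrow\ell\mid D_P$ requires $\deg(P)\geq 2$ so that $D_P$ has positive degree in the coefficients; since the theorem concerns $d\geq 3$ this is immaterial.)

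The case $D_P=0$ has a genuine gap. You assert that Rankin ``delivers the second estimate,'' but if you carry the computation out it does not. With the bound $\varrho_P(\ell^2)\ll\ell\deg(P)$ at every prime $\ell\nmid\mathcal{C}(P)$, the Euler factors in your Rankin product are $1+O(\deg(P)/\ell^{s-1})$, and the product over all primes outside the content diverges for $\Re(s)\leq 2$. Your heuristic that ``the $B^2$-factor replaces the role of $D_P$'' misses the essential change: when $D_P\neq 0$ there are only $\omega(D_P)$-many primes with $\varrho_P(\ell^2)\asymp\ell$, whereas when $D_P=0$ there are infinitely many of them (every $\ell$ at which the repeated irreducible factor of $P$ has a root). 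So Rankin can only be run at abscissa $2+\epsilon$, giving $x^{2+\epsilon}$ rather than $x^\epsilon\mathcal{C}(P)^{1+\epsilon}$. In fact the stated inequality cannot hold in this generality: for $P=(t^2+1)^2$ one has $\mathcal{C}(P)=1$ and $\varrho_P(\ell^2)=2\ell$ for every $\ell\equiv 1\pmod 4$, so $\sum_{k\leq x}\mu(k)^2\varrho_P(k^2)\gg x^2/\log x$, which is not $O_\epsilon(x^\epsilon)$. (The paper's own treatment of this case restricts the factor $k_2$ to divide $\mathcal{C}(P)$, a constraint that likewise fails in this example, so the statement for $D_P=0$ appears problematic in any event; but the immediate issue with your attempt is that the Rankin step does not close.)
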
\begin{proof} Assume that $D_P\neq 0$ and write $k=k_1k_1k_2k_3$, where   $k_i$ is the product of all primes $\ell\mid k$ corresponding to the $i$-th case 
of Lemma \ref{lem:boundsforprimes}. 
Then the sum over $k$ in the lemma is at most 
$$   
 \sum_{k_1^2\mid P }   k_1^2
 \sum_{  k_2\mid P }  \mu(k_2  )^2 k_2 
 \deg(P)^{\omega(k_2)}
 \sum_{  k_4  \mid D_P   }  \mu(k_4 )^2      (\deg(P))^{\omega(k_4)}   \sigma(k_4)   
 \sum_{ \substack{ k_3  \leq x/ (k_1 k_2 k_4) 
 }} \mu(k_3)^2  {\deg(P)}^{\omega(k_3)}     
 ,$$ where $\sigma$ is the sum of divisors function.  The sum over $k_3$ is $O_{\deg(P),\epsilon}( x^{1+\epsilon}/(k_1k_2k_4)  )$, thus, we get 
 $$ \ll_{\deg(P),\epsilon}  x^{1+\epsilon} 
  \sum_{k_1^2\mid P }   k_1
 \sum_{   k_2 \mid  P }    
 \mu(k_2 )^2   {\deg(P)}^{\omega(k_2)}   
 \sum_{k_4\mid D_P}
 \mu(k_4)^2  \frac{ \sigma(k_4)   }{k_4} (\deg(P))^{\omega(k_4)}  .$$
 The sum over $k_2$ is at most $ (2\deg(P))^{\omega(P)}
 \leq (2\deg(P))^{\omega(D_P)}$
 and the sum over $k_4$ is $$
 \leq 
\prod_{\ell\mid D_P} \left( 1+\frac{\deg(P)(\ell+1) }{\ell}\right)\leq (3\deg(P))^{\omega(D_P)}.$$ 
This completes the proof when $D_P\neq 0$.

If $D_P=0$ then we note that 
$\rho_P(\ell^2)\leq \ell^2$ when $\ell^2\mid P$
and $\rho_P(\ell^2)\leq \deg(P)(\ell+1)$ when $\ell^2\nmid P$. Factoring $k$ as $k_1k_2$,
where $k_1$ is composed of primes $\ell\mid k$ with 
$\ell^2\mid P$, gives the following bound for the 
sum in the lemma:
$$\sum_{ \substack{ k_1k_2\leq x\\ k_1^2\mid P, k_2\mid P }}   
\mu(k_1k_2)^2  k_1^2  \sigma(k_2) \deg(P)^{\omega(k_2)}
\ll_{\deg(P),\epsilon} x^\epsilon
\sum_{ \substack{  k_1^2\mid P\\ k_2\mid P }}  \mu(k_1k_2)^2    k_1^2  k_2  
.$$  Note that $k_1^2k_2$ is upper-bounded by the content of $P$, hence, the divisor bound completes the proof. 
\end{proof}

\begin{lemma}\label{lem:boundtailsquare} Fix $d\geq 3$ and 
$ \epsilon>0$. Then for all $H,z\geq 1$ 
we have  $$ 
\sum_{\substack{ P\in 
\c P_d(H)\\
 D_P\neq 0 }} \left(  \sum_{k>z} \mu(k)  \frac{ \varrho_P(k^2) }{ k^2 } \right)^2\ll_{d,\epsilon}   \frac{H^{d+1+\epsilon}}{z^{2-\epsilon}}
,$$ where the implied constant 
depends only on $\deg(P)$ and $\epsilon$.
\end{lemma}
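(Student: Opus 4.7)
The plan is to dominate the tail by its absolute value, apply Abel summation to bound it in terms of $\sum_{k^2\mid P}k$, square, and then evaluate the resulting divisor sum averaged over $P$.

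First, since $|\mu(k)|=\mu(k)^2$, we bound
$$\Bigl|\sum_{k>z}\mu(k)\frac{\varrho_P(k^2)}{k^2}\Bigr|\leq\sum_{k>z}\mu(k)^2\frac{\varrho_P(k^2)}{k^2}.$$
By Abel summation this tail equals $2\int_z^\infty T(y)y^{-3}\,dy$ where $T(y)=\sum_{z<k\leq y}\mu(k)^2\varrho_P(k^2)$, and Lemma \ref{lem:unweightedsum} (first case, which applies because $D_P\neq 0$) gives $T(y)\ll_{d,\epsilon}(3d)^{2\omega(D_P)}y^{1+\epsilon}\sum_{k^2\mid P}k$. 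The integral then yields
$$\Bigl|\sum_{k>z}\mu(k)\frac{\varrho_P(k^2)}{k^2}\Bigr|\ll_{d,\epsilon}(3d)^{2\omega(D_P)}z^{-1+\epsilon}\sum_{k^2\mid P}k.$$

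Squaring and summing over $P\in\c P_d(H)$ with $D_P\neq 0$, the factor $(3d)^{4\omega(D_P)}$ satisfies $(3d)^{4\omega(D_P)}\ll_{d,\epsilon}|D_P|^{\epsilon}\ll_d H^{(2d-2)\epsilon}$ by the divisor bound together with the standard estimate $|D_P|\ll_d H^{2d-2}$, and so it is absorbed into a factor $H^\epsilon$ after rescaling $\epsilon$. Everything thus reduces to showing
$$\sum_{P\in\c P_d(H)}\Bigl(\sum_{k^2\mid P}k\Bigr)^2\ll_d H^{d+1}.$$

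For this I expand the square as $\sum_{k_1,k_2\geq 1}k_1k_2\,\mathds{1}[\mathrm{lcm}(k_1,k_2)^2\mid P]$ and swap the order of summation. Writing $L=\mathrm{lcm}(k_1,k_2)$, the count $\#\{P\in\c P_d(H):L^2\mid P\}$ is $O_d((H/L^2)^{d+1})$, since every coefficient must lie in $[-H,H]\cap L^2\Z$. Parametrizing $k_1=ga$, $k_2=gb$ with $\gcd(a,b)=1$ so that $L=gab$ reduces the outer sum to a constant multiple of
$$H^{d+1}\sum_{g,a,b\geq 1}\frac{1}{g^{2d}a^{2d+1}b^{2d+1}},$$
and this triple series converges precisely because $d\geq 3$. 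The main obstacle is this final convergence step: the series barely converges at $d=3$, so one cannot afford to apply Cauchy--Schwarz at the squaring stage, as this would replace $k_1k_2$ by $k_1^2$ and weaken the joint divisibility to $k_1^2\mid P$ alone. One must expand the square directly and exploit the condition $\mathrm{lcm}(k_1,k_2)^2\mid P$.
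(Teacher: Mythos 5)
Your argument is correct and is essentially identical to the paper's: bound the tail via Lemma~\ref{lem:unweightedsum} and partial summation, absorb the $(3d)^{O(\omega(D_P))}$ factor into $H^{\epsilon}$ using $|D_P|\ll_d H^{2d-2}$ and the divisor bound, expand the square into a sum over pairs $(k_1,k_2)$ weighted by $\#\{P:[k_1,k_2]^2\mid P\}\ll (H/[k_1,k_2]^2)^{d+1}$, and parametrize by the gcd.

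One remark in your write-up is incorrect, though it does not affect the validity of the proof. The triple series
$$\sum_{g,a,b\geq 1}\frac{1}{g^{2d}\,a^{2d+1}\,b^{2d+1}}$$
converges for all $d\geq 1$ (the exponents need only exceed $1$), so it does not ``barely converge at $d=3$,'' and $d\geq 3$ is not the reason this step goes through. In fact the hypothesis $d\geq 3$ plays no role for the sum over $P$ with $D_P\neq 0$; in the paper it is invoked only for the companion estimate over polynomials with $D_P=0$ (where a count of the form $\sum_{\mathcal V\leq H}\mathcal V^2\cdot H^d/\mathcal V^d\ll H^d\log H$ requires $d\geq 3$), a case which does not arise under the restriction $D_P\neq 0$ in the displayed statement. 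Your accompanying claim that Cauchy--Schwarz ``cannot be afforded'' is therefore also off the mark; the direct expansion is simply the natural thing to do, not a delicate necessity.
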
\begin{proof}When $P\in \c P_d(H)$ and $D_P\neq 0$,
Lemma \ref{lem:unweightedsum}
and   partial summation yields \begin{equation}\label{ifigenia}
\left|  \sum_{t>z} \mu(t)  
\frac{ \varrho_P(t^2) }{ t^2 } \right| \ll_{d,\epsilon} \frac{  (3d)^{\omega(D_P)}  }
{z^{1-\epsilon}}    \sum_{k^2\mid P }  k
\ll_{d,\epsilon}\frac{   H^\epsilon  }
{z^{1-\epsilon}}    \sum_{k^2\mid P} \mu(k)^2  k\end{equation} uniformly in $P$ and $z$. 
This is due to $|P|\leq H$,
the bound $D_P\ll H^{c(d)}$ that holds for some $c(d)\geq 0$ and    
$\omega(t)\ll (\log t)/(\log \log t)$. 
The overall bound then becomes 
$$ \ll \frac{(Hz)^{2\epsilon}}{z^2}
\sum_{\substack{ P\in \c P_d(H)
\\ D_P\neq 0}} \left(\sum_{k^2\mid P}k\right)^2= \frac{(Hz)^{2\epsilon}}{z^2}
\sum_{k_1,k_2 \in \mathbb N}k_1k_2
\sharp\{ P\in \c P_d(H):
[k_1,k_2]^2\mid P \},$$ where $[k_1,k_2]$ is the least common multiple. Let $\delta:=\gcd(k_1,k_2)$ so that 
$k_i=\delta s_i$ and the sum over $k_i$ becomes 
\[\ll 
\sum_{k_1,k_2 \in \mathbb N} k_1k_2
\left(\frac{H}{[k_1,k_2]^2}
\right)^{d+1}
\leq 
\sum_{\delta, s_1,s_2 \in \mathbb N} \delta^2 s_1s_2
\left(\frac{H}{(\delta s_1s_2)^2}
\right)^{d+1}
\ll H^{d+1}.\] When $D_P=0$,
we use Lemma \ref{lem:unweightedsum} to get 
$$\left|  \sum_{t>z} \mu(t)  \frac{ \varrho_P(t^2) }{ t^2 } \right| \ll_{d,\epsilon}  
\frac{\c C(P)^{1+\epsilon}}{z^{1-\epsilon}}
,$$ hence, $$
\sum_{\substack{ P\in \c P_d(H)\\
D_P= 0  }} \left(  \sum_{k>z} \mu(k)  \frac{ \varrho_P(k^2) }{ k^2 } \right)^2
\ll_{d,\epsilon} 
z^{-2+\epsilon} H^\epsilon
\sum_{\substack{ P\in \c P_d(H) \\  D_P= 0 }}  \c C(P)^{2} . $$ Letting $\c V:=\c C(P)$
and writing $Q=P/\c V$,
the sum in the right-hand side becomes 
$$\sum_{\c V\leq H } \c V^2 
\sum_{\substack{ Q\in \c P_d(H/\c V)
\\D_Q= 0  }}  1\ll  
\sum_{\c V\leq H } \c V^2 \frac{H^d}{\c V^d}
\ll H^d \log H$$ by   \cite{MR2505814} 
and $d\geq 3 $.  \end{proof}

\section{Transition to the model}
\begin{lemma}\label{lab:lemafirstmom}
For any $x,H,z,d\geq 1$ we have 
$$ (2H)^{-d-1}
\sum_{\substack{ P\in \c P_d(H)}} 
\sum_{n\leq x} \mu(P(n ))^2
=\frac{x}{\zeta(2)}   +O\left(
\frac{  x}{z}
+ \frac{x^{1+d/2}}{H^{1/2}}+
  \frac{x z}{H} \right),$$ where 
  the implied constant depends only
on $d$.
\end{lemma}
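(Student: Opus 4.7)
The plan is to use the M\"obius identity $\mu(m)^2 = \sum_{k \geq 1,\, k^2 \mid m} \mu(k)$, valid for nonzero $m$ (with $\mu(0)^2 = 0$), and truncate the $k$-sum at $z$. Writing $\mathcal{E}$ for the contribution of $k > z$, this gives
$$\sum_{P \in \c P_d(H)} \sum_{n \leq x} \mu(P(n))^2 = \sum_{k \leq z} \mu(k) \sum_{n \leq x} \#\{P \in \c P_d(H) : k^2 \mid P(n)\} + \mathcal{E},$$
where in the main block the condition $P(n) \neq 0$ is dropped at the cost of an error $\ll xH^d$ that is absorbed below.

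The key observation is that, writing $P(t) = a_d t^d + \cdots + a_0$, the divisibility $k^2 \mid P(n)$ is a single linear congruence on $a_0$ with unit leading coefficient. Hence for any fixed $(a_1, \ldots, a_d) \in [-H, H]^d \cap \Z^d$ the number of valid $a_0 \in [-H,H] \cap \Z$ is $(2H+1)/k^2 + O(1)$, so uniformly in $n \geq 1$ and $k \geq 1$,
$$\#\{P \in \c P_d(H) : k^2 \mid P(n)\} = \frac{(2H+1)^{d+1}}{k^2} + O\bigl((2H+1)^d\bigr).$$
Inserting this bound into the sum over $k \leq z$ and invoking $\sum_{k \leq z} \mu(k)/k^2 = \zeta(2)^{-1} + O(1/z)$ produces the main term $x(2H+1)^{d+1}/\zeta(2)$ together with errors $O(x(2H)^{d+1}/z)$ from the zeta tail and $O(xz(2H)^d)$ from the $O((2H+1)^d)$ remainder; after dividing by $(2H)^{d+1}$ these are precisely the claimed $x/z$ and $xz/H$.

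For $\mathcal{E}$, use $|\mu(k)| \leq 1$ together with the fact that $k^2 \mid P(n) \neq 0$ forces $k \leq |P(n)|^{1/2} \ll (Hx^d)^{1/2}$, and reuse the counting bound to obtain
$$|\mathcal{E}| \ll x \sum_{z < k \ll (Hx^d)^{1/2}} \left( \frac{(2H+1)^{d+1}}{k^2} + (2H+1)^d \right) \ll \frac{x(2H)^{d+1}}{z} + x^{1+d/2} H^{d+1/2},$$
which after division by $(2H)^{d+1}$ delivers the remaining error $x^{1+d/2}/H^{1/2}$. No step presents a serious obstacle; this is routine sieve bookkeeping, with the only care needed in the tail, where one must cap $k$ via $|P(n)| \ll Hx^d$ and then sum the trivial piece $(2H+1)^d$ over the full range $k \leq (Hx^d)^{1/2}$ to recover the exponent $1+d/2$ on $x$ with the correct scaling in $H$.
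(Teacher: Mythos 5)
Your proposal is correct and follows essentially the same approach as the paper: truncate the M\"obius identity at $z$, count polynomials via the single linear congruence on the constant coefficient, and bound the tail by capping $k \ll (Hx^d)^{1/2}$. You are slightly more explicit than the paper about the $P(n)=0$ degeneracy, but the decomposition, the counting step, and the error bookkeeping are identical.
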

\begin{proof}
We have 
$$\sum_{P\in \c P_d(H)} 
\sum_{n\leq x} \mu(P(n ))^2
=\sum_{n\leq x} \sum_{k\leq z} \mu(k) 
\#\{P\in \c P_d(H): k^2 \mid P(n )\}
+O(E),$$ where 
the implied constant is absolute and 
$$E= \sum_{n\leq x} 
\sum_{z<k\ll \sqrt{ (d+1) x^d H}} 
|\mu(k)| \#\{P\in \c P_d(H): k^2 \mid P(n )\}
.$$ Fixing all coefficients of $P$ except
the constant 
we obtain $$ \#\{P\in \c P_d(H): 
k^2 \mid P(n )\}\ll
H^{d} \left( \frac{H}{k^2} +1 \right)
\ll \frac{H^{d+1}}{k^2}+H^d ,$$ thus, 
$E\ll x (H^{d+1}z^{-1}+ H^{d+1/2} x^{d/2} ) $.
Writing $P=\sum_{j=0}^d c_j t^j$ 
the main term equals   $$ 
\sum_{\substack{n\leq x\\k\leq z}} \mu(k) 
\sum_{|c_1,\ldots, c_d|\leq H}
\#\left\{|c_0|\leq H: c_0\equiv -\sum_{j=1 }^d c_j n^j \md{k^2}\right\}
.$$ The inner cardinality equals $2H k^{-2}+O(1)$, hence, we get 
\[\sum_{\substack{n\leq x\\k\leq z}} \mu(k) 
((2H)^{d+1}k^{-2}+O(H^{d}))
= \frac{x}{\zeta(2)} (2H)^{d+1} +O\left( \frac{x H^{d+1} }{z}+ xH^d z \right).\qedhere\]
\end{proof}
 For $x,H,z\geq 1 $ write  
$$ \mathfrak S_P(z):=\sum_{k\leq z} \mu(k) \frac{\varrho_P(k^2)}{k^2}\ \ \ \textrm{ and } \ \ \ 
V(H,x,z):=\sum_{P\in \c P_d(H)}
\left( S_P(x) - \mathfrak S_P(z) x\right)^2.$$ 
\begin{lemma}\label{lem:initialsplit} 
For any $H,z,x\geq 1$ and any fixed $\epsilon>0$
we have $$ \sum_{P\in \c P_d(H)}
  E_P(x)^2 =V(H,x,z)+O_{d,\epsilon}\left(V(H,x,z)^{1/2} x
\left( \frac{H^{d+1+\epsilon}}
{z^{2-\epsilon}} \right)^{1/2}
+  x ^2  \frac{H^{d+1+\epsilon}}{z^{2-\epsilon}} \right),$$   with an implied constant that depends only on 
$\epsilon$ and $d$.\end{lemma}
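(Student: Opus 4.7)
The plan is to write $E_P(x)$ as the sum of two pieces, the first of which reproduces $V(H,x,z)$ and the second of which is the tail of the local density series controlled by Lemma \ref{lem:boundtailsquare}. Concretely, since $E_P(x)=S_P(x)-\mathfrak S_P x$, one has
$$E_P(x)=\bigl(S_P(x)-\mathfrak S_P(z)x\bigr)-x\bigl(\mathfrak S_P-\mathfrak S_P(z)\bigr).$$
Write $A_P:=S_P(x)-\mathfrak S_P(z)x$ and $B_P:=x\sum_{k>z}\mu(k)\varrho_P(k^2)/k^2$, so that $E_P(x)=A_P-B_P$. Expanding the square and summing over $P\in\c P_d(H)$ yields
$$\sum_{P\in\c P_d(H)} E_P(x)^2 \;=\; \sum_P A_P^2 \;-\; 2\sum_P A_P B_P \;+\; \sum_P B_P^2,$$
and by definition $\sum_P A_P^2=V(H,x,z)$.

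The next step is to control $\sum_P B_P^2$. This quantity is exactly $x^2$ times the left-hand side of Lemma \ref{lem:boundtailsquare} (the case $D_P\neq 0$), and the last paragraph of the proof of that lemma already treats the $D_P=0$ polynomials separately with a stronger bound; combining the two gives
$$\sum_{P\in\c P_d(H)} B_P^2 \;\ll_{d,\epsilon}\; x^2\,\frac{H^{d+1+\epsilon}}{z^{2-\epsilon}},$$
which accounts for the second error term in the statement. For the cross term I apply Cauchy--Schwarz:
$$\Bigl|\sum_P A_P B_P\Bigr| \;\le\; \Bigl(\sum_P A_P^2\Bigr)^{1/2}\Bigl(\sum_P B_P^2\Bigr)^{1/2} \;\ll_{d,\epsilon}\; V(H,x,z)^{1/2}\, x\left(\frac{H^{d+1+\epsilon}}{z^{2-\epsilon}}\right)^{1/2},$$
producing the first error term.

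There is no real obstacle here: the lemma is a clean application of Cauchy--Schwarz comparing the full local factor $\mathfrak S_P$ with the truncation $\mathfrak S_P(z)$, the entire arithmetic input having been packaged into Lemma \ref{lem:boundtailsquare}. The only mild point of care is to confirm that the cited tail bound applies uniformly over all $P\in\c P_d(H)$ (including $D_P=0$), which follows directly from the two-case structure inside the proof of Lemma \ref{lem:boundtailsquare}. The implied constants depend only on $d$ and $\epsilon$, as required.
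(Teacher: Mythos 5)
Your proof is correct and follows essentially the same route as the paper: decompose $E_P(x)=(S_P(x)-\mathfrak S_P(z)x)-x(\mathfrak S_P-\mathfrak S_P(z))$, expand the square, invoke Lemma~\ref{lem:boundtailsquare} for the tail, and apply Cauchy--Schwarz to the cross term. You also correctly note the small mismatch between the statement of Lemma~\ref{lem:boundtailsquare} (which restricts to $D_P\neq 0$) and its proof (which handles $D_P=0$ with a stronger bound), which is exactly how the tail sum over all $P$ is controlled.
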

\begin{proof}  
Since $E_P(x)=O(x)$, the sum in the lemma equals 
\begin{equation}\label{eq:kuba}\sum_{P\in \c P_d(H)}
( S_P(x) - \mathfrak S_P x)^2 +O(x^2 H^d \log H)
.\end{equation}We expand $( S_P(x) - \mathfrak S_P x)^2 $ as  $$
( S_P(x) - \mathfrak S_P(z) x)^2+O(
| S_P(x) - \mathfrak S_P(z) x|\cdot |\mathfrak S_P-\mathfrak S_P(z)| x
+|\mathfrak S_P-\mathfrak S_P(z)|^2 x ^2).$$ 
Cauchy's inequality 
shows that the sum in \eqref{eq:kuba}
equals
$V(H,x,z)$ up to a quantity of modulus   
$$\ll
V(H,x,z)^{1/2} x 
\left(\sum_{P\in \c P_d(H)}|\mathfrak S_P-\mathfrak S_P(z)|^2 \right)^{1/2}
+  x ^2\sum_{P\in \c P_d(H)}
|\mathfrak S_P-\mathfrak S_P(z)|^2.$$ Alluding to
   Lemma \ref{lem:boundtailsquare} concludes the proof.
 \end{proof} 
For   $x,z\geq 1$, $n \in \mathbb Z, P\in \Z[t]$ define $$
\mu^2_{\mathrm{model}}(n):=\sum_{\substack{k\in \mathbb N
\cap [1,z]\\ k^2 \mid n }}\mu(k)
\ \ \ \textrm{ and } \ \ \   
S_P(x,z)=\sum_{1\leq n \leq x } \mu^2_{\mathrm{model}}(P(n))
.$$\begin{lemma}\label{lem:cutdown}
For any 
$x,z,H\geq 1 $ we have 
\begin{align*}V(H,x,z)&=
(2H)^{d+1} \frac{x}{\zeta(2)}
 +O\left( x z H^d
+  (xH)^\epsilon\left(\frac{x^2 H^{d+1}}{z}+
x^{2+d/2} H^{d+1/2}\right)\right)\\
&+
\sum_{1\leq n\neq m \leq x } 
\sum_{P\in \c P_d(H)}
\mu^2_{\mathrm{model}}(P(n))
\mu^2_{\mathrm{model}}(P(m))
\\&-2x
\sum_{1\leq n  \leq x } 
\sum_{P\in \c P_d(H)}
\mu^2_{\mathrm{model}}(P(n))
\mathfrak S_P(z)  
+x^2 \sum_{1\leq n  \leq x } 
\sum_{P\in \c P_d(H)} \mathfrak S_P(z)^2
.\end{align*}
\end{lemma}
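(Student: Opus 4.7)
The plan is to expand the square defining $V(H,x,z)$, identify the diagonal contribution of $\sum_P S_P(x)^2$ with the main term via Lemma~\ref{lab:lemafirstmom}, and then replace $\mu(P(n))^2$ by $\mu^2_{\mathrm{model}}(P(n))$ in the remaining pieces, controlling the resulting tail errors through Cauchy--Schwarz in the $P$-sum together with Lemma~\ref{lem:boundtailsquare}.

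Concretely, I first expand
\[
V(H,x,z)=\sum_{P\in\c P_d(H)}S_P(x)^2-2x\sum_{P\in\c P_d(H)}S_P(x)\mathfrak S_P(z)+x^2\sum_{P\in\c P_d(H)}\mathfrak S_P(z)^2,
\]
where $S_P(x):=\sum_{n\leqslant x}\mu(P(n))^2$; the last summand is already the final term on the right-hand side of the claim. Splitting the double sum in $\sum_P S_P(x)^2$ into $n=m$ and $n\neq m$ and using $\mu^4=\mu^2$, the diagonal reduces to $\sum_P\sum_{n\leqslant x}\mu(P(n))^2$. Multiplying Lemma~\ref{lab:lemafirstmom} by $(2H)^{d+1}$ evaluates this as $(2H)^{d+1}x/\zeta(2)+O(xH^{d+1}/z+x^{1+d/2}H^{d+1/2}+xzH^d)$; the last error appears in the claim verbatim, and the other two will be dominated by the swap-errors analyzed next.

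I then write $\mu(P(n))^2=\mu^2_{\mathrm{model}}(P(n))+R_P(n)$ with $R_P(n):=\sum_{k>z,\,k^2\mid P(n)}\mu(k)$ and substitute in both the off-diagonal sum $\sum_P\sum_{n\neq m}\mu(P(n))^2\mu(P(m))^2$ and the cross sum $\sum_n\sum_P\mu(P(n))^2\mathfrak S_P(z)$. This produces the two model sums of the claim together with error terms that are bilinear or linear in $R_P$. The central identity is
\[
\sum_{n\leqslant x}R_P(n)\;=\;x\bigl(\mathfrak S_P-\mathfrak S_P(z)\bigr)\;+\;O\!\Bigl(\sum_{z<k\ll(Hx^d)^{1/2}}\varrho_P(k^2)\Bigr),
\]
which follows from $\#\{n\leqslant x:k^2\mid P(n)\}=x\varrho_P(k^2)/k^2+O(\varrho_P(k^2))$ together with the fact that $k\ll(Hx^d)^{1/2}$ whenever $k^2\mid P(n)$ for some non-zero $P\in\c P_d(H)$ and $n\leqslant x$. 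After Cauchy--Schwarz in $P$, the mean square of the first summand is controlled by Lemma~\ref{lem:boundtailsquare} at the rate $H^{d+1+\varepsilon}/z^{2-\varepsilon}$, while the contribution of the second summand is controlled by the pointwise bound of Lemma~\ref{lem:unweightedsum}. Paired with the trivial $\sum_P\mathfrak S_P(z)^2\ll H^{d+1}$ and $\sum_P S_P(x,z)^2\ll x^2H^{d+1+\varepsilon}$, the three replacement errors collapse into the target bound $(xH)^\varepsilon(x^2H^{d+1}/z+x^{2+d/2}H^{d+1/2})$.

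The main obstacle I anticipate is the coarse remainder $\sum_{z<k\ll(Hx^d)^{1/2}}\varrho_P(k^2)$ arising in $\sum_n R_P(n)$: the range is far broader than what Lemma~\ref{lem:boundtailsquare} alone absorbs, so one must invoke Lemma~\ref{lem:unweightedsum} and treat the degenerate locus $D_P=0$ separately, exactly as was done in that lemma's proof. A secondary bookkeeping point is to absorb the divisor-type prefactors $(3d)^{\omega(D_P)}$ produced by Lemmas~\ref{lem:unweightedsum} and~\ref{lem:boundtailsquare} into the $(xH)^\varepsilon$ slack via $\omega(n)\ll\log n/\log\log n$ together with $|D_P|\ll H^{c(d)}$.
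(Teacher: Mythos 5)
Your overall skeleton — expand $V(H,x,z)$ dispersion-style, treat the diagonal $n=m$ via Lemma~\ref{lab:lemafirstmom}, then replace $\mu(P(\cdot))^2$ by $\mu^2_{\mathrm{model}}(P(\cdot))$ in the remaining pieces — agrees with the paper's proof. The gap is in how you control the replacement errors: the Cauchy--Schwarz-plus-Lemma~\ref{lem:boundtailsquare} route does not reproduce the stated bound, and in fact loses a factor of order $H/x$ in the dominant error term.

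To see this concretely, consider the bilinear error $\sum_{P}\sum_{n\neq m}\mu^2_{\mathrm{model}}(P(n))\,R_P(m)$ with $R_P(m)=\sum_{k>z,\,k^2\mid P(m)}\mu(k)$. Your central identity gives $\sum_m R_P(m)=x\bigl(\mathfrak S_P-\mathfrak S_P(z)\bigr)+O\bigl(\sum_{z<k\ll(Hx^d)^{1/2}}\varrho_P(k^2)\bigr)$. The first piece behaves well: Cauchy--Schwarz in $P$ together with Lemma~\ref{lem:boundtailsquare} gives $\ll (xH)^\epsilon x^2 H^{d+1}/z$, consistent with the target. But the remainder piece is the problem. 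Lemma~\ref{lem:unweightedsum} gives, pointwise in $P$, $\sum_{z<k\ll(Hx^d)^{1/2}}\mu(k)^2\varrho_P(k^2)\ll (3d)^{2\omega(D_P)}(Hx^d)^{1/2+\epsilon}\sum_{k^2\mid P}k$; squaring, summing over $P$ (using the argument of Lemma~\ref{lem:boundtailsquare} to get $\sum_P(\sum_{k^2\mid P}k)^2\ll H^{d+1+\epsilon}$), and then pairing via Cauchy--Schwarz with $\bigl(\sum_P S_P(x,z)^2\bigr)^{1/2}\ll x(xH)^\epsilon H^{(d+1)/2}$ yields
\[
\ll (xH)^\epsilon\, x H^{(d+1)/2}\cdot (Hx^d)^{1/2}H^{(d+1)/2}
= (xH)^\epsilon\, x^{1+d/2} H^{d+3/2},
\]
rather than the claimed $(xH)^\epsilon x^{2+d/2}H^{d+1/2}$. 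The ratio is $H/x$, which exceeds $1$ precisely in the regime $x\leq H^\alpha$, $\alpha<1$, used in Theorem~\ref{mmm:ttt}; the inflated term $x^{1+d/2}H^{d+3/2}$ can never be made $\ll H^{d+1}x^{1/2}$ and would destroy the theorem. The loss comes from routing through $\varrho_P(k^2)$: $\sum_P\varrho_P(k^2)\ll H^{d+1}+k^2H^d$ carries an extra $k^2$ relative to the sharp per-$m$ count $\#\{P\in\c P_d(H):k^2\mid P(m)\}\ll H^{d+1}/k^2+H^d$.

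The paper sidesteps this entirely. It never forms $\sum_m R_P(m)$ as a signed sum; instead it bounds $\bigl|\sum_{n\neq m}\sum_P \mu(P(n))^2R_P(m)\bigr|$ by $x\sum_m\sum_{z<k\ll\sqrt{Hx^d}}\mu(k)^2\#\{P\in\c P_d(H):k^2\mid P(m)\}$ (using the trivial $\mu(P(n))^2\leq 1$ and taking absolute values inside), then evaluates $\#\{P:k^2\mid P(m)\}$ for fixed $m,k$ directly — fix $c_1,\dots,c_d$, the constant term is pinned modulo $k^2$ — to get $\ll H^{d+1}/k^2+H^d$, hence $\ll x^2(H^{d+1}/z+H^{d+1/2}x^{d/2})$, which is exactly the error from Lemma~\ref{lab:lemafirstmom}. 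The second replacement is identical with $\mu(P(n))^2\leq 1$ replaced by $\mu^2_{\mathrm{model}}(P(m))\ll(xH)^\epsilon$, and the cross term with $\mathfrak S_P(z)$ is handled by a direct divisor-counting argument, not Cauchy--Schwarz. If you want to rescue your write-up, replace the Cauchy--Schwarz step by this direct polynomial count: bound $\sup_{n,P}\mu^2_{\mathrm{model}}(P(n))\ll(xH)^\epsilon$ pointwise and then estimate $\sum_P\sum_m|R_P(m)|$ by swapping to the $(m,k)$-summation and counting $P$, never invoking $\varrho_P(k^2)$ for this piece.
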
\begin{proof}
The quantity $V(H,x,z)$ equals 
 \begin{equation}
    \label{dispersion}\sum_{n,m \leq x}
\sum_{ P\in \c P_d(H)} 
\mu(P(n ))^2 \mu(P(m ))^2
-2x\sum_{n \leq x}
\sum_{P\in \c P_d(H)}   
\mathfrak S_P(z)  \mu(P(n ))^2
+x^2 
\sum_{P\in \c P_d(H)}   
\mathfrak S_P(z)^2 .
\end{equation} 
The terms $n=m$  give rise to $\sum_{n\leq x}
\sum_P \mu(P(n))^2$  
that  can be estimated by   Lemma \ref{lab:lemafirstmom}.
Writing  $\mu(P(m))^2$ as 
$\sum_{k^2\mid P(m)} \mu(k)^2$,
we see that the contribution of the terms   $k>z$ towards \eqref{dispersion} is 
$$\ll \sum_{n\neq m \leq x} 
\sum_{ P\in \c P_d(H)}    \mu(P(n))^2
\sum_{\substack{k>z\\k\mid P(m)}}  \mu(k) ^2
\ll x E,$$ where $E$ is as in the proof of 
Lemma \ref{lab:lemafirstmom}. Using the bound 
for $E$ proved there we obtain the  
error term $\ll  x^2 (H^{d+1}z^{-1}+ H^{d+1/2} x^{d/2} ) .$ Thus, up to an admissible 
error term, the  
first term in  \eqref{dispersion} is 
$$\sum_{n\neq m \leq x}
\sum_{ P\in \c P_d(H), \ D_P\neq 0}     \mu(P(n ))^2 
\mu^2_{\mathrm{model}}(P(m ))
.$$ We next bound 
the contribution of the terms $k>z$ 
in
$ \mu(P(n ))^2=\sum_{k\mid P(n))} \mu(k)^2$
similarly. By  the   bound $\mu^2_{\mathrm{model}}(P(m ))
\ll \tau(P(m))\ll_\epsilon (xH)^\epsilon$  we obtain
$$\sum_{\substack{n\neq m \leq x\\
P\in \c P_d(H)
}}  
\left( \mu(P(n ))^2 \mu(P(m ))^2
-\mu^2_{\mathrm{model}}(P(n )) 
\mu^2_{\mathrm{model}}(P(m )) \right)
\ll (xH)^\epsilon\left(\frac{x^2 H^{d+1}}{z}+
x^{2+d/2} H^{d+1/2}\right).$$ To deal with the second term in   \eqref{dispersion} we note that   
$$\sum_{n \leq x}\sum_{   P\in \c P_d(H)
}  
\mathfrak S_P(z)
( \mu(P(n ))^2-\mu^2_{\mathrm{model}}(P(n )) ) \ll   \sum_{n \leq x}\sum_{ 
P\in \c P_d(H)}  |\mathfrak S_P(z)|
\sum_{\substack{k>z\\ k^2\mid P(n)}}\mu(k)^2
.$$ Using  \eqref{ifigenia} with $T=1$ this is 
$$ 
\ll 
x\sum_{n \leq x}  
\sum_{\substack{t\leq \sqrt H \\ z<k\ll\sqrt{Hx^d} }}
\mu(t)^2 t \mu(k)^2  \#\{P\in \c P_d(H): t^2\mid P, k^2\mid P(n)\} .$$
Writing $g:=P/t^2$ we obtain $$
\ll  \sum_{\substack{n \leq x\\ t\geq 1, k>z }}
\hspace{-0.3cm}
\mu(t)^2 t \mu(k)^2  
\#\{g\in \c P_d(H/t^2): 
(k/\gcd(k,t))^2\mid g(n)\}   
\ll \hspace{-0.5cm} 
\sum_{\substack{t\leq \sqrt H \\ z<k\ll\sqrt{Hx^d} }}
 \hspace{-0.5cm}  xt
 \left(\frac{H}{t^2}\right)^d
  \hspace{-0.1cm} 
\left(\frac{H\gcd(k,t)^2}{t^2k^2}+1\right).$$
Using $\gcd(k,t)\leq t$ this becomes \[
\ll  x H^{d+1}
\sum_{\substack{t\leq \sqrt H \\ z<k\ll\sqrt{Hx^d} }}
  \frac{1}{t^{2d-1}}
 \frac{1}{k^2}+x
\sum_{\substack{t\leq \sqrt H \\ z<k\ll\sqrt{Hx^d} }}
 \frac{H^d}{t^{2d-1}}  \ll  \frac{x H^{d+1} (\log H)}{z}+ x^{1+d/2} H^{d+1/2}\log H.\qedhere\]  
\end{proof}

\begin{lemma}
\label{lem:local}
For any prime $\ell$,
and $d\geq 1$ and $n,m\in \mathbb Z$ 
we have 
$$\frac{\#\{g\in (\Z/\ell^2\Z)[t]: \deg(g)\leq d, g(n)=g(m)=0\}}{\ell^{2(d+1)}}
=\begin{cases}\ell^{-4}, &  v_\ell(m-n)=0, \\
\ell^{-3}, & v_\ell(m-n)=1, \\
\ell^{-2}, & v_\ell(m-n)\geq 2.
\end{cases}$$\end{lemma}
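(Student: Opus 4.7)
Writing $g(t)=\sum_{j=0}^d c_j t^j$ with $\b c=(c_0,\dots,c_d)\in(\Z/\ell^2\Z)^{d+1}$, the conditions $g(n)\equiv 0\bmod{\ell^2}$ and $g(m)\equiv 0\bmod{\ell^2}$ are two $\Z/\ell^2\Z$-linear conditions on $\b c$. The plan is to decouple them using the algebraic identity
\[
g(m)-g(n)=(m-n)\,Q(\b c;n,m),\qquad Q:=\sum_{j=1}^d c_j\sum_{i=0}^{j-1}n^i m^{j-1-i},
\]
so that the pair is equivalent to $g(n)\equiv 0$ and $(m-n)Q\equiv 0\bmod{\ell^2}$.

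First I note that the coefficient of $c_0$ in $g(n)$ equals $1$, so $\b c\mapsto g(n)$ is a surjective $\Z/\ell^2\Z$-linear form, and its fibre over $0$ has exactly $\ell^{2d}$ elements, parametrised by $(c_1,\dots,c_d)\in(\Z/\ell^2\Z)^d$ with $c_0\equiv -\sum_{j\ge 1}c_j n^j\bmod{\ell^2}$. On this fibre $Q$ depends only on $c_1,\dots,c_d$, and the coefficient of $c_1$ in $Q$ is $1$ (the $j=1$ contribution is $\sum_{i=0}^0 n^i m^{-i}=1$); hence the restriction of $Q$ to the fibre is itself a surjective $\Z/\ell^2\Z$-linear map, so each value of $Q$ is attained by exactly $\ell^{2(d-1)}$ tuples $\b c$.

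Consequently the cardinality in the lemma equals
\[
\ell^{2(d-1)}\cdot\#\bigl\{Q\in\Z/\ell^2\Z:(m-n)Q\equiv 0\bmod{\ell^2}\bigr\}=\ell^{2(d-1)}\cdot\ell^{\min(v_\ell(m-n),\,2)},
\]
which is $\ell^{2d-2}$, $\ell^{2d-1}$, $\ell^{2d}$ in the respective cases $v_\ell(m-n)=0$, $1$, $\ge 2$. Dividing by $\ell^{2(d+1)}$ yields the three densities $\ell^{-4},\ell^{-3},\ell^{-2}$ claimed.

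There is no serious obstacle here; the only subtlety is verifying that $Q$ remains surjective after imposing the constraint $g(n)\equiv 0\bmod{\ell^2}$, which follows immediately from the unit coefficient of $c_1$ in $Q$. The same argument would handle analogous statements for more evaluation points by tracking higher $2\times 2$ minors of the Vandermonde-type matrix, but for the lemma as stated the single identity above suffices.
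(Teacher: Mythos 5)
Your proof is correct and is essentially the paper's argument: both reduce to the $2\times 2$ linear system in $(c_0,c_1)$ over $\Z/\ell^2\Z$ whose Vandermonde determinant is $m-n$, and count the solutions according to $v_\ell(m-n)$. Your factorization $g(m)-g(n)=(m-n)Q$ with the coefficient of $c_1$ in $Q$ being a unit is a clean way to make the row reduction explicit, and it handles all three cases uniformly via $\#\{Q\in\Z/\ell^2\Z:(m-n)Q\equiv 0\}=\ell^{\min(v_\ell(m-n),2)}$, sidestepping the paper's separate reformulation of the middle case through the derivative $g'$.
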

\begin{proof}We write $g=\sum_{i=0}^d c_i t^i$
where $\b c \in (\Z/\ell^2\Z)^{d+1}$. Thus, 
describes a system of two linear equations in $d+1$ variables over $\Z/\ell^2\Z$. We fix $c_i$ for each $i\geq 2$
to end up with a $2\times 2$ system whose determinant is $m-n$. When $\ell\nmid m-n$ the variables $c_0$ and $c_1$ are fully determined by the remaining $c_i$, which proves the first case.
When  $v_\ell(n-m)=1$ then the system 
$g(n)=g(m)=0 \mod \ell^2$ is equivalent to 
$g(n)=0 \md {\ell^2}, g'(m)=0 \md \ell$.
The second equation determines $c_1\md \ell$ and the first equation determines $c_0\md{\ell^2}$, 
thus, we obtain $\ell^{-3}$. When $\ell^2\mid m-n$ 
the  system $g(n)=g(m)=0 \mod \ell^2$ is the 
same as $g(n)=0\md \ell^2$. In this case 
$c_0\md {\ell^2}$ is determined uniquely the remaining $c_i$, which gives the factor $\ell^{-2}$.\end{proof}

\begin{lemma}
\label{lem:local2}For any
$d\geq 1$, any prime $\ell$,
and $n\in \mathbb Z$ 
we have 
$$\sum_{\substack{ g\in (\Z/\ell^2\Z)[t], \deg(g)\leq d  \\ g(n)\equiv 0 \md{\ell^2} } }\varrho_g(\ell^2)
=\ell^{2d}\left(3-\frac{2}{\ell}\right)
\ \ \mathrm{ and } \ \ 
\sum_{\substack{ g\in (\Z/\ell^2\Z)[t]\\ \deg(g)\leq d   } }\varrho_g(\ell^2)^2
=\ell^{2d+2}\left(3-\frac{2}{\ell}\right).$$
\end{lemma}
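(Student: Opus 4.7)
Both identities reduce to Lemma~\ref{lem:local} after swapping orders of summation. For the first, since $\varrho_g(\ell^2)$ counts residues $m\in\Z/\ell^2\Z$ with $g(m)\equiv 0 \md{\ell^2}$, exchanging sums rewrites the left-hand side as
$$\sum_{m\in\Z/\ell^2\Z}\#\{g\in(\Z/\ell^2\Z)[t] : \deg(g)\leq d,\ g(n)\equiv g(m)\equiv 0 \md{\ell^2}\}.$$
The plan is to partition the outer sum according to whether $v_\ell(m-n)$ equals $0$, $1$, or is $\geq 2$, the latter case forcing $m=n$ as elements of $\Z/\ell^2\Z$. The corresponding residue classes contain $\ell^2-\ell$, $\ell-1$, and $1$ elements respectively, and Lemma~\ref{lem:local} supplies per-term contributions $\ell^{2d-2}$, $\ell^{2d-1}$, and $\ell^{2d}$. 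Summing produces
$$(\ell^2-\ell)\,\ell^{2d-2}+(\ell-1)\,\ell^{2d-1}+\ell^{2d}=3\ell^{2d}-2\ell^{2d-1}=\ell^{2d}\left(3-\frac{2}{\ell}\right),$$
which is the asserted value.

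For the second identity I would apply the same device to a double sum. Expanding $\varrho_g(\ell^2)^2$ as a sum over ordered pairs $(m_1,m_2)\in(\Z/\ell^2\Z)^2$ and interchanging gives
$$\sum_{m_1,m_2\in\Z/\ell^2\Z}\#\{g : \deg(g)\leq d,\ g(m_1)\equiv g(m_2)\equiv 0 \md{\ell^2}\}.$$
The number of ordered pairs with $v_\ell(m_1-m_2)$ equal to $0$, $1$, or $\geq 2$ is $\ell^2(\ell^2-\ell)$, $\ell^2(\ell-1)$, and $\ell^2$ respectively, so Lemma~\ref{lem:local} yields the total
$$\ell^2(\ell^2-\ell)\,\ell^{2d-2}+\ell^2(\ell-1)\,\ell^{2d-1}+\ell^2\cdot\ell^{2d}=\ell^{2d+2}\left(3-\frac{2}{\ell}\right).$$

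I do not expect any substantial obstacle here: Lemma~\ref{lem:local} has already absorbed the only nontrivial counting, so what remains is a direct tally of residue classes in $\Z/\ell^2\Z$ grouped by $\ell$-adic distance. The only point requiring care is that in the case $v_\ell\geq 2$ there is exactly one residue class (namely the diagonal), which is automatically consistent with the normalization in Lemma~\ref{lem:local}.
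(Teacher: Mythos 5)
Your proof is correct and matches the paper's approach: both identities are obtained by interchanging the order of summation and invoking Lemma~\ref{lem:local} together with a count of residue classes in $\Z/\ell^2\Z$ grouped by $\ell$-adic distance. The only cosmetic difference is in the second identity, where the paper simply observes that it follows from the first (by opening one factor of $\varrho_g(\ell^2)$ as a sum over zeros and summing the first identity over $n\in\Z/\ell^2\Z$), whereas you expand both factors and redo the double-sum interchange directly; both are valid and equally elementary.
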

\begin{proof}
The first sum can be written as 
 $$ \sum_{x\in \Z/\ell^2\Z}\#\{g\in (\Z/\ell^2\Z)[t]: \deg(g)\leq d, g(n)=0=g(x)\}=  \ell^2 \ell^{2(d+1)}
 \left(  \frac{\ell^2-\ell}{\ell^4}
 +\frac{ \ell-1}{\ell^3} +\frac{1}{\ell^2}
 \right)$$ by Lemma~\ref{lem:local}. This proves the first claim;
the second   follows   from the first.
\end{proof}\begin{lemma}\label{lem:middle}For any $\epsilon>0,H,z\geq 1, x\in \mathbb N$ we have 
\begin{align*}\sum_{n \leq x}
\sum_{ P\in \c P_d(H)}
\mu^2_{\mathrm{model}}(P(n))\,\mathfrak S_P(z)
&= \gamma (2H)^{d+1} x+ O\!\left(x z^\epsilon\bigl(z^{4d+2}
+H^d z^2+H^{d+1}/z\bigr)\right), \\[0.6em]
\sum_{n \le x} \sum_{P\in \c P_d(H)}
\mathfrak S_P(z)^2 &= \gamma (2H)^{d+1} 
+ O\!\left(z^{\epsilon+4d+2}+z^{\epsilon+2} H^d+H^{d+1}/z\right), \end{align*} where the implied constant depends only on $d,\epsilon$ and  we let    
$$\gamma:=\prod_{\substack{\ell \ \mathrm{ prime} \\ \ell=2}}^\infty\left( 1-\frac{2}{\ell^2}+
\frac{3-2/\ell}{\ell^3}
\right).$$ \end{lemma}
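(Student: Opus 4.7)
The plan is to open up both sums via the definitions of $\mu^2_{\mathrm{model}}$ and $\mathfrak S_P(z)$, reducing each to a count of polynomials $P\in\c P_d(H)$ obeying two simultaneous congruence conditions. Writing $\mu^2_{\mathrm{model}}(P(n))=\sum_{k\leq z,\,k^2\mid P(n)}\mu(k)$ and $\varrho_P(j^2)=\sum_{b}\mathbf 1_{P(b)\equiv 0\md{j^2}}$ with $b$ ranging over residues mod $j^2$, the first sum becomes $\sum_{n\leq x}\sum_{k,j\leq z}\mu(k)\mu(j)j^{-2}\sum_{b}\#\{P\in\c P_d(H):k^2\mid P(n),\ j^2\mid P(b)\}$, and analogously for the second with an additional summation over $a\md{k^2}$ replacing the fixed value $n$.

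For each fixed $(k,j,n,b)$ I would estimate the inner polynomial count by lattice-point counting. The conditions $k^2\mid P(n)$ and $j^2\mid P(b)$ define a sublattice of index $N=Q^{d+1}\rho(k,j,n,b)$ in $\Z^{d+1}$ modulo $Q:=\mathrm{lcm}(k^2,j^2)$, where $\rho$ is the density of solutions; a standard box-residue argument yields $(2H)^{d+1}\rho+O(H^d Q\rho)$ when $H\geq Q$, and the trivial bound $Q^{d+1}\rho$ otherwise. By the Chinese Remainder Theorem $\rho$ factors as $\prod_\ell\rho_\ell$, with $\rho_\ell=1$ when $\ell\nmid kj$, $\rho_\ell=\ell^{-2}$ when $\ell$ divides exactly one of $k,j$, and $\rho_\ell$ the function of $v_\ell(n-b)$ given by Lemma~\ref{lem:local} when $\ell\mid\gcd(k,j)$.

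The next step is to carry out the summation over $b\md{j^2}$ (and, in the second sum, also over $a\md{k^2}$). This decouples across primes, and Lemma~\ref{lem:local2} supplies the crucial local identities $\sum_{b\md{\ell^2}}\rho_\ell(n,b)=(3-2/\ell)/\ell^2$ at primes $\ell\mid\gcd(k,j)$, and $\sum_{a,b\md{\ell^2}}\rho_\ell(a,b)=3-2/\ell$ for the second sum. The former is independent of $n$, so the outer sum $\sum_{n\leq x}$ simply contributes a factor of $x$. Combining with the M\"obius weights and extending the resulting truncated sum over squarefree $k,j$ to all of $\mathbb{N}$, the main term becomes $(2H)^{d+1}x$ times the Euler product defining $\gamma$; the second sum proceeds identically, with the richer $\sum_{a,b}$ identity combining with the additional factor of $k^{-2}$ coming from $\mathfrak S_P(z)$ to reproduce the same Euler product.

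Finally, the three error terms correspond to three regimes. The lattice-point error $H^d Q\rho$, summed over $a,b,k,j$, contributes the $O(z^{\epsilon+2}H^d)$ term in the regime $Q\leq H$. The trivial bound $Q^{d+1}\rho\leq Q^{d-1}$ in the regime $Q>H$, summed over the $\leq z^2$ pairs $(k,j)$ and their $\leq z^4$ residues, yields the $O(z^{\epsilon+4d+2})$ contribution. Completing the truncated Euler product to an infinite product, controlled via $\sum_{k>z}\mu(k)^2\, 3^{\omega(k)}/k^2\ll z^{-1+\epsilon}$, yields the $O(H^{d+1}/z)$ contribution. I expect the main obstacle to be the careful separation of these three regimes of $Q$ relative to $H$, together with the verification that the local factors at primes $\ell\mid\gcd(k,j)$ (involving the quantity $3-2/\ell$ from Lemma~\ref{lem:local2}) assemble precisely into the Euler product defining $\gamma$.
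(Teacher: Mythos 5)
Your proposal follows essentially the same route as the paper's proof: open both sums into simultaneous congruence conditions on $P$, count polynomials in the box via residue classes modulo $\mathrm{lcm}(k^2,j^2)$, invoke Lemma~\ref{lem:local} via CRT for the local densities, and use Lemma~\ref{lem:local2} for the average over $b$ (and $a$). The only cosmetic difference is that the paper parametrises the pair $(k,j)$ as $(t_0t_1,t_0t_2)$ with $t_0=\gcd(k,j)$ and tracks the count $\Xi$ of $P\equiv g\pmod{(t_0t_1t_2)^2}$ directly, whereas you work with $Q=\mathrm{lcm}(k^2,j^2)$ and the density $\rho$; these are equivalent.

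Two small points worth flagging. First, the ``main obstacle'' you anticipate --- verifying that the local factors assemble into the stated Euler product --- would actually reveal a typo in the lemma: carrying out your calculation (density $\ell^{-2}$ at a prime dividing exactly one of $k,j$, the factor $\ell^{-2}$ from $j^{-2}$, and $\sum_{b\bmod\ell^2}\rho_\ell=(3-2/\ell)/\ell^2$ at a prime dividing both) gives the Euler factor $1-2/\ell^2+(3-2/\ell)/\ell^4$, not $(3-2/\ell)/\ell^3$ as displayed; the exponent $4$ is also what the paper's own $(t_0^2t_1t_2)^{2}$ in its main-term formula produces, and it is the value needed for the identity $\gamma=\gamma_2$ used in the final section. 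Second, your inequality ``$Q^{d+1}\rho\leq Q^{d-1}$'' is not correct as stated (one only has $\rho\leq Q^{-1}$, giving $Q^{d+1}\rho\leq Q^d$); it becomes correct once you fold in the weight $j^{-2}$ (resp.\ $(kj)^{-2}$), since $(kj)^2=Q\cdot\gcd(k,j)^2\geq Q$. Your final bound $O(z^{\epsilon+4d+2})$ is right, so this is only a bookkeeping slip in the write-up.
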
\begin{proof} 
Writing  $t_0=\gcd(k_1,k_2)$  makes 
 the  first sum equal to $$\sum_{\substack{ 
n \leq x\\ k_1,  k_2\leq z }} \mu(k_1) 
\frac{\mu(k_2)}{k_2^2}
\sum_{\substack{ P\in \c P_d(H) \\ 
k_1^2\mid P(n)}} \varrho_P(k_2^2)=
\hspace{-0.2cm}
\sum_{\substack{ n \leq x, t_0t_1\leq z \\ \gcd(t_1,t_2)=1, t_0t_2\leq T }} 
\hspace{-0.5cm}
\mu(t_0t_1) \frac{\mu(t_0t_2)}{(t_0t_2)^2}
\sum_{\substack{ g\md{(t_0 t_1 t_2 )^2}\\ 
\deg(g)\leq d,
(t_0 t_1)^2\mid g(n)}} \varrho_P((t_0 t_2)^2)
\Xi,$$
where  $\Xi$
is the cardinality of $P\in \c P_d(H)$ with 
$P\equiv g 
\md{(t_0 t_1 t_2)^2}$. We       estimate 
$\Xi$ as \newline
$ (2H)^{d+1}(t_0 t_1 t_2 )^{-2d-2} 
+O(1+H^{d}(t_0 t_1 t_2 )^{-2d} )$ and use Lemma \ref{lem:local2} to bound the error term 
by 
$$\ll x \sum_{\substack{   t_0t_1\leq z \\  t_0t_2\leq z }} 
\frac{ \mu^2(t_0t_1t_2)}{(t_0t_2)^2}
(1+H^{d}(t_0 t_1 t_2 )^{-2d} )
 t_1^{2d}t_2^{2d+2}
t_0^{2d}3^{\omega(t_0)}
\ll xz^\epsilon(z^{4d+2}+H^d z^2).$$
By Lemma \ref{lem:local2} the main term contribution 
is
$$ 
\sum_{\substack{  \max\{ t_0t_1,t_0t_2\}\leq z \\ \gcd(t_1,t_2)=1  }}  
\mu(t_0t_1) \frac{\mu(t_0t_2)}{(t_0^2t_1t_2)^2}
\prod_{\ell\mid t_0 } \left(3-\frac{2}
{\ell}\right)
=\gamma+O_\epsilon(z^{\epsilon-1}).$$
Following a similar   path   we can write 
the second sum in the lemma 
as $$ \sum_{\substack{t_0t_1\leq z\\ 
\gcd(t_0,t_1)=1,  t_0 t_2 \leq z}} 
 \frac{\mu( t_0 t_1 ) \mu( t_0 t_2)  }
 {(t_0^2t_1t_2)^2} 
 \sum_{\substack{g\md{(t_0 t_1 t_2 )^2 }\\\deg(g)\leq d } } 
 \varrho_P((t_0 t_1)^2) \varrho_P((t_0 t_2)^2) \Xi  .$$ 
By Lemma \ref{lem:local2} the error term is 
$$\ll z^\epsilon
\sum_{\substack{t_0t_1\leq z\\ 
  t_0 t_2 \leq z}} 
(t_0 t_1 t_2 )^{2d} 
(1+H^{d}(t_0 t_1 t_2 )^{-2d} )
\ll z^{\epsilon+4d+2}+z^{\epsilon+2} H^d.$$
 The main term  can be dealth with analogously.
 \end{proof}\begin{lemma}\label
 {lem:morte d'abel avondano}For any $\epsilon>0,H,z,x\geq 1$ and $n\neq m \in \mathbb N$  we have 
$$\sum_{P\in \c P_d(H)}
\mu^2_{\mathrm{model}}(P(n))
\mu^2_{\mathrm{model}}(P(m))
=(2H)^{d+1}c(n,m)
+ O\left(
z^{4d+2}+z^2H^d+
\frac{H^{d+1} \log z}{z}
\right), 
$$ where the implied constant depends only on $d,\epsilon$ and  we let    
$$c(m,n):=\prod_{\substack{\ell \ \mathrm{ prime} \\ \ell=2}}^\infty
\left(1-\frac{2}{\ell^2} +
\frac{1}{\ell^2} 
\begin{cases}
\ell^{-2}, & \ell \nmid m-n, \\
\ell^{-1}, & v_\ell(m-n)=1, \\
1, & \ell^2 \mid m-n
\end{cases}
\right)  .$$ \end{lemma}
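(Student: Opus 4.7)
The plan is to parallel the proof of Lemma \ref{lem:middle}, using Lemma \ref{lem:local} where that proof invoked Lemma \ref{lem:local2}. First I would expand both factors of $\mu^2_{\mathrm{model}}$ by definition and swap the order of summation to
\[
\sum_{P\in\c P_d(H)} \mu^2_{\mathrm{model}}(P(n))\mu^2_{\mathrm{model}}(P(m))
=\sum_{k_1, k_2 \le z}\mu(k_1)\mu(k_2)\,\#\{P \in \c P_d(H): k_1^2 \mid P(n),\ k_2^2 \mid P(m)\}.
\]
Parametrising the squarefree pair $(k_1,k_2)$ by $t_0:=\gcd(k_1,k_2)$, $t_1:=k_1/t_0$, $t_2:=k_2/t_0$ turns the M\"obius weights into $\mu(t_0)^2\mu(t_1)\mu(t_2)$ with $t_0,t_1,t_2$ pairwise coprime and squarefree. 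By the Chinese remainder theorem the divisibility pair collapses to one congruence $P \equiv g \pmod{N}$ with $N:=(t_0t_1t_2)^2$, and coefficient-wise counting gives $\#\{P \in \c P_d(H): P \equiv g \pmod{N}\} = (2H)^{d+1}/N^{d+1} + O(H^d/N^d + 1)$ uniformly in $g$.

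The next step is to evaluate the number $\Xi$ of admissible $g$ as a product $\Xi=\prod_\ell \Xi_\ell$: at primes $\ell \mid t_0$ both constraints are active modulo $\ell^2$, so Lemma \ref{lem:local} identifies $\Xi_\ell/\ell^{2(d+1)}$ with one of $\ell^{-4},\ell^{-3},\ell^{-2}$ according to $v_\ell(m-n)$; at primes $\ell \mid t_1$ (disjoint from $t_0t_2$) only $\ell^2 \mid g(n)$ survives, giving $\Xi_\ell=\ell^{2d}$, and symmetrically for $\ell \mid t_2$. Plugging this into the main contribution yields
\[
(2H)^{d+1}\sum_{\substack{t_0t_1,\,t_0t_2 \le z\\ \gcd(t_1,t_2)=1}} \mu(t_0)^2\mu(t_1)\mu(t_2)\,\frac{\Xi}{N^{d+1}},
\]
and extending the triple sum to all pairwise coprime squarefree triples produces an absolutely convergent Euler product whose $\ell$-factor exactly matches the case-expression defining $c(n,m)$ in the statement.

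For error bookkeeping, I would control the truncation tail by $\Xi/N^{d+1}\le(t_0t_1t_2)^{-2}$ together with $\sum_{u>z}\tau(u)/u^2\ll(\log z)/z$, contributing $O(H^{d+1}(\log z)/z)$. The counting error $O(H^d/N^d+1)$ per residue class, summed against $\Xi \le (t_0t_1t_2)^{2d}$, yields $H^d$ per triple from the $H^d/N^d$ part and $(t_0t_1t_2)^{2d}$ per triple from the constant part; the first then sums to $O(z^2 H^d)$ over the $O(z^2)$ admissible triples, while the second sums to $O(z^{4d+2})$ by executing the geometric sums in $t_1,t_2$ for fixed $t_0\le z$ (a factor $t_0^{-2d-2}$ ensuring convergence in $t_0$).

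The hard part will be the Euler-product identification: tracking three distinct types of local factor, corresponding to $\ell\mid t_0$, $\ell\mid t_1$, $\ell\mid t_2$, together with the sign cancellations among $\mu(t_0)^2\mu(t_1)\mu(t_2)$, so that the resulting $\ell$-factor collapses exactly into the compact case-expression of $c(n,m)$. All remaining estimates are direct analogues of those in Lemma \ref{lem:middle}.
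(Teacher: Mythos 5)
Your proposal is correct and follows essentially the same route as the paper: expand $\mu^2_{\mathrm{model}}$ by definition, parametrise $(k_1,k_2)$ by $t_0=\gcd(k_1,k_2)$ and $t_i=k_i/t_0$, count admissible residues $g \bmod (t_0t_1t_2)^2$ via Lemma~\ref{lem:local} (obtaining $(t_0t_1t_2)^{2d}f_{m-n}(t_0)$), plug in the lattice-point estimate $(2H)^{d+1}N^{-(d+1)}+O(1+H^dN^{-d})$, and complete the main term's triple sum to an Euler product at cost $O(H^{d+1}(\log z)/z)$. The error bookkeeping ($O(z^{4d+2})+O(z^2H^d)$) and the Euler-product identification both check out exactly as you sketch, modulo a trivial notational difference (you use $\Xi$ for the count of residues $g$, the paper uses it for the count of $P\equiv g$).
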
\begin{proof} 
We write the sum as 
$$ \sum_{k_1,k_2\leq z} \mu(k_1)\mu(k_2)
\sum_{\substack{ g \md{(k_1 k_2)^2 }, \deg(g)\leq d  \\ k_1^2 \mid g(n), k_2^2 \mid g(m) }}\sum_{\substack{ P\in \c P_d(H)\\ P\equiv g \md{(k_1k_2)^2} }}1.$$ Factoring 
$k_i$ as $t_0 t_i$ and recalling 
the definition of $\Xi$  in the proof of 
Lemma \ref{lem:middle} we get  $$ \sum_{\substack{ t_0 t_1 \leq z \\
\gcd(t_1,t_2)=1, t_0 t_2 \leq z }}
 \mu( t_0 t_1 )\mu( t_0 t_2)
\sum_{\substack{ g \md{(k_1 k_2)^2 }, \deg(g)\leq d  \\ k_1^2 \mid g(n), k_2^2 \mid g(m) }}\Xi,$$
 By Lemma \ref{lem:local}
the number of $g$ is $ (t_0 t_1 t_2)^{2d}f_{m-n}(t_0)$, where 
$$ f_k(t):=\prod_{\ell\mid t }  \begin{cases}\ell^{-2}, & \text{if } v_\ell(k)=0, \\
\ell^{-1}, & \text{if } v_\ell(k)=1, \\
1, & \text{if } v_\ell(k)\geq 2.
\end{cases} $$ Now the estimate 
$\Xi=(2H)^{d+1}(t_0 t_1 t_2 )^{-2d-2} 
+O(1+H^{d}(t_0 t_1 t_2 )^{-2d} )$
leads to the expression
$$ \sum_{\substack{ t_0 t_1, t_0 t_2 \leq z \\
\gcd(t_1,t_2)=1 }} \mu( t_0 t_1 )\mu( t_0 t_2)
 f_{m-n}(t_0)
\left(
\frac{(2H)^{d+1}}{(t_0 t_1 t_2 )^2 }
+O( (t_0 t_1 t_2)^{2d}+H^{d}  )
\right)
.$$
The error term contribution is 
$$
\ll 
\sum_{t_0\leq z }
\sum_{t_1,t_2\leq z/t_0}
( (t_0 t_1 t_2)^{2d}+H^{d}  )
\ll z^{4d+2}+z^2H^d
.$$ The main term equals 
$$ (2H)^{d+1}
\sum_{\substack{ t_0 t_1, t_0 t_2 \leq z \\
\gcd(t_1,t_2)=1 }} 
\frac{\mu( t_0 t_1 )\mu( t_0 t_2)}
{(t_0 t_1 t_2 )^2 }  f_{m-n}(t_0).$$
Removing the size restrictions on $t_0 t_i$ 
can be done at the cost of 
$$\ll H^{d+1} \sum_{ t_0 t_1>z} 
\frac{1}{(t_0 t_1)^2 } =
H^{d+1} \sum_{n>z} \frac{\tau(n)}{n^2}\ll\frac{
H^{d+1} \log z}{z}.$$
We end up with the main term $ (2H)^{d+1}$ multiplied by the constant  \[ 
\sum_{\substack{ \b t \in \mathbb N^3 \\
\gcd(t_1,t_2)=1 }} 
\frac{\mu( t_0 t_1 )\mu( t_0 t_2)}
{(t_0 t_1 t_2 )^2 }  f_{m-n}(t_0) 
=c(m,n).\qedhere\] \end{proof}

\section{Ces\`aro summation}
Even if a sum   $S(N)=\sum_{n\leq N} b_n$ diverges as $N\to\infty$,
the average  of the sums
$$\frac{1}{x}\sum_{N\leq x} S(N)=
\frac{1}{x}\sum_{n\leq x} b_n (x+1-n), \ \ \ x\in \mathbb N$$ might still converge. If
 not,  one can iterate this process 
to increase the chance of convergence.
This procedure, typically called Ces\`aro summation,
is common in Fourier analysis.

In our set-up, Ces\`aro summation
is built in the second moment expression.
Indeed, the non-diagonal terms in the second moment
give rise to 
$$\sum_{1\leq n\neq m\leq x }\sum_{\substack{|P|\leq H}}
\mu^2_{\mathrm{model}}(P(n))
\mu^2_{\mathrm{model}}(P(m)),
$$ which, by Lemma \ref{lem:morte d'abel avondano}, 
is approximated by  
$$(2H)^{d+1}\sum_{1\leq n\neq m\leq x } c(m,n)
.$$
By definition, $c$ is     a function of $m-n$
and letting $k=m-n$ we get $$(2H)^{d+1}
2\sum_{1\leq k\leq x } (x-k) b(k)
, \ \ \ x\in \mathbb N,$$ where $b(m-n)=c(m,n)$.
This allows us to use a   version of 
Perron's formula 
from~\cite[\S 5.1.1]{MR2378655}:
$$\sum_{1\leq n\neq m\leq x } c(m,n)
= 
\frac{1}{\pi i } \int_{\Re(s)=\theta}
\left(\sum_{k=1}^\infty \frac{b(k)}{k^s}\right)
\frac{x^{s+1}}{s(s+1)} \mathrm ds$$ for all $\theta>1$. This converges owing to 
$|c(m,n)|=\prod_{\ell} (1+O(\ell^{-2}))=O(1)$,
uniformly in $m,n$.  
\begin{remark}\label{rem:integration}
The situation is delicate since 
 we are aiming at an error term of size $O(\sqrt x)$. This can be achieved only if 
 we   shift the contour of integration into a subset of  $\Re(s)\leq -1/2$.
It   turns out that the Dirichlet series of $b(k)$
is essentially $\zeta(s)$ and this causes   
difficulties as the functional equation shows that the size of $\zeta(s)$ increases as fast as the
Gamma function
when $\Re(s) <0$. It is therefore important to truncate the integral to $-T\leq \Im(s) \leq T$ for a small $T$. The   factor $1/s(s+1)$ helps
but is not   enough. One must   
take into account cancellation   in the values of $x^{\theta+it}$ as $t\to\infty$. This is the goal of the   next lemmas.\end{remark}
\begin{lemma}\label{eq:integral1} For all $
\theta \in [0,2],T\geq 2$ and $
y>0$ with $y\neq 1 $ we have 
$$ \int_{\substack{ \Re(s)=\theta \\ |\Im(s)|\geq T }}
\frac{y^s}{s(s+1)} \mathrm ds\ll \frac{y^\theta}{T^2|\log y|},$$ with an absolute implied constant.
\end{lemma}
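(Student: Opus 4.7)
The plan is to establish this estimate via a single integration by parts in the imaginary variable, exploiting the oscillation of $y^{it}$. Parameterize $s=\theta+it$ and split the region $|\Im(s)|\geq T$ into the two rays $t\geq T$ and $t\leq -T$; by symmetry (complex conjugation, or rerunning the same argument) it suffices to bound the piece with $t\geq T$. On this piece, write
\[ \int_{T}^{\infty}\frac{y^{\theta+it}}{(\theta+it)(\theta+1+it)}\,i\,dt
= i\,y^{\theta}\int_{T}^{\infty} e^{it\log y}\,g(t)\,dt,
\qquad g(t):=\frac{1}{(\theta+it)(\theta+1+it)}, \]
so the entire question reduces to estimating the oscillatory integral of $g$ against $e^{it\log y}$.

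The key step is to rewrite $e^{it\log y}=\frac{1}{i\log y}\frac{d}{dt}e^{it\log y}$ (this is where the hypothesis $y\neq 1$ enters, giving the factor $1/|\log y|$) and integrate by parts. The boundary term at $t=\infty$ vanishes because $g(t)=O(t^{-2})$, and the boundary term at $t=T$ contributes $\frac{1}{|\log y|}\cdot|g(T)|\ll \frac{1}{|\log y|\,T^{2}}$, since for $\theta\in[0,2]$ and $t\geq T\geq 2$ one has $|\theta+it|\geq t$ and $|\theta+1+it|\geq t$. The remaining integral is
\[ -\frac{1}{i\log y}\int_{T}^{\infty} e^{it\log y}\,g'(t)\,dt. \]
A direct differentiation (or the logarithmic-derivative identity) shows $g'(t)=O(t^{-3})$ uniformly for $\theta\in[0,2]$, so the trivial bound gives
\[ \left|\frac{1}{\log y}\int_{T}^{\infty} g'(t)\,dt\right|
\ll \frac{1}{|\log y|}\int_{T}^{\infty}\frac{dt}{t^{3}}\ll \frac{1}{|\log y|\,T^{2}}. \]
Adding the two contributions, multiplying by $y^{\theta}$, and doubling to account for the symmetric ray $t\leq -T$ yields the claimed bound $y^{\theta}/(T^{2}|\log y|)$ with an absolute implied constant.

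There is no serious obstacle: the only thing to watch is that the constants in $|g(t)|\ll t^{-2}$ and $|g'(t)|\ll t^{-3}$ are uniform for the full range $\theta\in[0,2]$, which is immediate from $|\theta+it|\geq |t|$ and $|\theta+1+it|\geq |t|$ once $|t|\geq T\geq 2$. The restriction $y\neq 1$ is used exactly once, when dividing by $\log y$; the bound diverges there, as it must, since the integral itself is only conditionally convergent.
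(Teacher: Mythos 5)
Your proof is correct and follows essentially the same route as the paper: integrate by parts in $t$ via $y^{it}=(i\log y)^{-1}(y^{it})'$, bound the boundary term by $T^{-2}$, and bound the remainder integral trivially using $g'(t)=O(t^{-3})$ uniformly for $\theta\in[0,2]$. One small slip in your closing remark: the integral is in fact absolutely convergent (the integrand is $O(y^\theta t^{-2})$), so the trivial bound already gives $O(y^\theta/T)$; the point of the integration by parts is merely to trade a factor of $|\log y|^{-1}$ for an extra power of $T$, and it is the stated bound, not the integral, that blows up as $y\to 1$.
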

\begin{proof} Writing $s=\theta+it $ we  use
$y^{it}=(y^{it})'/(i\log y)$
to get $\ll x^\theta(\log y)^{-1} (|I_1|+|I_2|)$, where $$ I_1=  \int_{T }^\infty 
\frac{(y^{it})'\mathrm dt}
{(\theta+it )(\theta+it +1)} =
-\frac{y^{iT}}{(\theta+iT)(\theta+iT+1)}
+\int_{T }^\infty 
\frac{i y^{it}(2\theta+2it+1)\mathrm dt}{(\theta+it)^2(\theta+it+1)^2}
,$$ and where $I_2$ is defined similarly with $t$ replaced by $-t$. Using $\theta=O(1)$
we see that \[|I_1|+|I_2| \ll \frac{1}{T^2}+
\int_{T }^\infty \frac{\mathrm dt}{t^3}\ll 
T^{-2}.\qedhere\]\end{proof}

\begin{lemma}\label{eq:integral2}
Let $f:\mathbb N\to \mathbb C$ be bounded by $1$ in modulus.
Then for all $x\in \mathbb N,\theta \in (1,2]$ we have 
$$\sum_{1\leq k \leq x } f(k)  (x-k)
=\frac{1}{2\pi i }\int_{\theta-iT}^{\theta+iT} 
\sum_{k=1}^\infty\frac{f(k)}{k^s}
\frac{x^{s+1} \mathrm ds }{s(s+1) }
+O\left(
\frac{x}{T}+ 
\frac{x^{1+\theta}}{T^2 (\theta-1)}
+\frac{x^2\log (x+1)}{T^2}  \right)
,$$ where the implied constant is absolute.
\end{lemma}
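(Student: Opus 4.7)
The plan is to build a pointwise Mellin--Barnes representation of the weight $(x-k)_+$, truncate the contour using Lemma \ref{eq:integral1}, and then exchange the order of summation and integration. Starting from the standard identity obtained by closing the contour leftward past the simple poles at $s=0$ and $s=-1$,
$$\frac{1}{2\pi i}\int_{\theta-i\infty}^{\theta+i\infty} \frac{y^{s+1}}{s(s+1)}\,\mathrm{d}s = \max(y-1,0), \qquad \theta>0,\ y>0,\ y\neq 1,$$
I would substitute $y=x/k$ and multiply through by $k$ to obtain
$$(x-k)_{+} = \frac{1}{2\pi i}\int_{\theta-i\infty}^{\theta+i\infty} \frac{x^{s+1}}{k^{s}\,s(s+1)}\,\mathrm{d}s \qquad (k \neq x).$$
Writing $x^{s+1}/k^{s}=x\cdot (x/k)^{s}$, Lemma \ref{eq:integral1} with $y=x/k$ bounds the tail of this integral by $O\!\left(x(x/k)^{\theta}/(T^{2}|\log(x/k)|)\right)$ for $k\neq x$.

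For $\theta\in(1,2]$ the Dirichlet series $\sum_k f(k)/k^{s}$ converges absolutely on $\Re(s)=\theta$, so Fubini justifies the swap of sum and truncated integral. Since $\sum_{k\geq 1} f(k)(x-k)_{+}$ equals the left-hand side of the lemma (the summand at $k=x$ vanishes), the proof reduces to controlling $\sum_{k\geq 1} f(k)\bigl((x-k)_{+}-I_{T}(x,k)\bigr)$, where $I_{T}(x,k)$ denotes the truncated Mellin--Barnes integral.

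The principal obstacle is that the tail bound degenerates as $k\to x$, so I would split the error sum into three ranges. For $1\leq k\leq x/2$ or $k\geq 2x$ one has $|\log(x/k)|\geq \log 2$, and the remaining sum $\sum_k 1/k^{\theta}\ll 1/(\theta-1)$ produces a contribution of at most $O(x^{\theta+1}/(T^{2}(\theta-1)))$ (the piece $k\geq 2x$ is dominated by this for $\theta\in(1,2]$). For $x/2<k<2x$ with $k\neq x$, I would use $|\log(x/k)|\asymp |x-k|/x$ to sharpen the bound to $O(x^{2}/(T^{2}|x-k|))$; summing over the $O(x)$ such values yields the harmonic sum $\sum_{0<|j|\leq x}1/|j|\ll \log(x+1)$, hence a total $O(x^{2}\log(x+1)/T^{2})$. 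Finally, for $k=x$ Lemma \ref{eq:integral1} is unavailable, so I would evaluate $I_{T}(x,x)=(x/2\pi i)\int_{\theta-iT}^{\theta+iT} \mathrm{d}s/(s(s+1))$ directly: partial fractions $1/(s(s+1))=1/s-1/(s+1)$ render the integral a difference of two arctangents $\arctan(T/\theta)-\arctan(T/(\theta+1))=O(1/T)$, contributing $O(x/T)$. The three ranges together yield the three error terms in the statement, the delicate parts being the $\log(x+1)$ factor arising from the intermediate harmonic sum and the $x/T$ term produced by the diagonal $k=x$ computation.
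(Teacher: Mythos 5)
Your proof is correct and follows essentially the same route as the paper: both invoke the Perron identity from \cite[pp.\ 142--143]{MR2378655}, truncate via Lemma \ref{eq:integral1} to get a pointwise error $\ll x^{1+\theta}/(k^\theta T^2|\log(x/k)|)$ for $k\neq x$, handle $k=x$ separately to get the $x/T$ term, and split the remaining error sum into a ``far'' range giving $\zeta(\theta)\ll 1/(\theta-1)$ and a ``near'' range where $1/|\log(x/k)|\asymp x/|x-k|$ produces the harmonic-sum factor $\log(x+1)$. The only cosmetic difference is that you work with $y^{s+1}/(s(s+1))$ yielding $\max(y-1,0)$ directly, whereas the paper uses $y^s/(s(s+1))$ yielding $\max(1-1/y,0)$ and multiplies through by $x$ afterward.
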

\begin{proof}By \cite[pages 142-143]{MR2378655}
for $y>0,\theta>1$ we have  
$$\frac{1}{2\pi i }\int_{\Re(s)=\theta} 
\frac{y^s }{s(s+1) } \mathrm ds
=\begin{cases}
1-1/y, &  y\geq  1  \\ 0, & 0<y\leq  1. \end{cases}
$$ Combining with Lemma \ref{eq:integral1}
we obtain the following   for all $T\geq 2 $ and $y>0$,
$$\frac{1}{2\pi i }\int_{\substack{ \Re(s)=\theta\\|t|\leq T }} 
\frac{y^s }{s(s+1) } \mathrm ds
=O\left(\frac{y^\theta}{T^2|\log y|}\right)+\begin{cases}
1-1/y, &  y> 1  \\ 0, & 0<y<  1 \end{cases}
$$ while, when $y=1$ the right-hand side is $\ll T^{-1}$. 
We obtain  $$\frac{1}{2\pi i }\int_{\theta-iT}^{\theta+iT} 
\frac{f(k)}{k^s}
\frac{x^s \mathrm ds }{s(s+1) }
+O\left ( 
\begin{cases} (x/k)^\theta T^{-2} , &  k\geq 1, k\neq x,   \\
T^{-1}, & k=x. \end{cases}
\right )=f(k) 
\begin{cases} 1-\frac{k}{x}, &  k\leq x,   \\
0, & k\geq x \end{cases}$$ 
Summing this over all $k$ gives 
$$\frac{1}{2\pi i }\int_{\theta-iT}^{\theta+iT} 
\sum_{k=1}^\infty  \frac{f(k)}{k^s}
\frac{x^{s+1}  }{s(s+1) } \mathrm ds
+O\left (
\frac{x}{T}  
+\frac{x^{1+\theta} }{T^2}
\sum_{k\neq x }^\infty
\frac{1}{k^\theta|\log(x/k)| }
 \right )=\sum_{1\leq k \leq x } f(k)  (x-k) .$$
The error term 
is 
$$\ll \frac{x}{T}+
\frac{x^{1+\theta}}{T^2} 
\sum_{k\neq x} \frac{ 1}{k^\theta |\log(x/k)| }
\ll
\frac{x}{T}+\frac{x^{1+\theta}}{T^2} 
\left(
\sum_{|k-x|>x/2 } \frac{ 1}{k^\theta   }
+\sum_{0<|k-x|\leq x/2} \frac{ 1}{ 
x^\theta|\log(x/k)| }\right)
.$$ We bound the sum over  $|k-x|>x/2$ by $\zeta(\theta)\ll 1/(\theta-1).$ 
For $k\in (x,3x/2]$ we have   
$$ \frac{1}{|\log (x/k)|}= 
\frac{1}{\log( 
1+\{k/x-1\})  
}\ll \frac{1}{k/x-1},$$ hence, the interval
$(x,3x/2]$ contributes 
$$\ll \frac{x}{T^2} 
\sum_{x<k\leq 3x/2} \frac{x}{k-x}
\ll 
\frac{x^2\log x}{T^2}  
.$$The interval $[x/2,x)$ is dealt with similarly.
\end{proof}
We now analyse the Dirichlet series of 
$$b(k):=\prod_{\substack{\ell \ \mathrm{ prime} \\ \ell=2}}^\infty
\left(1-\frac{2}{\ell^2} +
\frac{1}{\ell^2} 
\begin{cases}
\ell^{-2}, & \ell \nmid k, \\
\ell^{-1}, & v_\ell(k)=1, \\
1, & \ell^2 \mid k
\end{cases}
\right) 
.$$
\begin{lemma}\label{lem:localanalysis}
The  function  
$$G(s):=\prod_{\ell}
\left(1-\frac{1}{\ell^{2+2s}}\right) 
\left(1+ 
\frac{1}{(\ell-1)(\ell+1)^2} 
\frac{1}{\ell^s}+
\frac{\ell}{(\ell-1)(\ell+1)^2}
\frac{1}{\ell^{2s}}\right)
$$ is  holomorphic for $\Re(s)>-3/4$
and in that region it satisfies  $$\sum_{k=1}^\infty 
\frac{b(k) }{k^s}=\zeta(s)\zeta(2+2s) 
\frac{G(s)}{\zeta(2)^2}.$$  
\end{lemma}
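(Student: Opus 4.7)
The plan is to recognize $b$ as essentially multiplicative and then factor out the ``bad'' zeta contributions to isolate the remainder $G(s)$.

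First I would observe that $b(k) = b(1) \cdot \tilde b(k)$, where $\tilde b(k) := b(k)/b(1)$ is multiplicative in $k$ and
\[b(1) = \prod_\ell \left(1 - \frac{2}{\ell^2} + \frac{1}{\ell^4}\right) = \prod_\ell \left(1 - \frac{1}{\ell^2}\right)^2 = \frac{1}{\zeta(2)^2}.\]
Since the $\ell$-th factor in the definition of $b(k)$ depends only on $v_\ell(k)$, the quotient $\tilde b(k)$ is genuinely multiplicative, so $\sum_k b(k)/k^s = \zeta(2)^{-2}\prod_\ell E_\ell(s)$ with Euler factor
\[E_\ell(s) = 1 + \frac{\beta_\ell/\alpha_\ell}{\ell^s} + \frac{\gamma_\ell/\alpha_\ell}{\ell^{2s}}\cdot\frac{1}{1-\ell^{-s}},\]
where $\alpha_\ell = (1-\ell^{-2})^2$, $\beta_\ell = 1-2/\ell^2+1/\ell^3$, $\gamma_\ell = 1-1/\ell^2$. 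A direct computation gives $\beta_\ell/\alpha_\ell = 1 + 1/((\ell-1)(\ell+1)^2)$ and $\gamma_\ell/\alpha_\ell = \ell^2/((\ell-1)(\ell+1))$.

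Next, to extract $\zeta(s)\zeta(2+2s)$, I would multiply $E_\ell(s)$ by $(1-\ell^{-s})(1-\ell^{-2-2s})$. The first factor clears the geometric series and yields, after simplification of the coefficient of $\ell^{-2s}$ (which collapses to $\ell/((\ell-1)(\ell+1)^2)$),
\[(1-\ell^{-s})E_\ell(s) = 1 + \frac{1}{(\ell-1)(\ell+1)^2}\cdot\frac{1}{\ell^s} + \frac{\ell}{(\ell-1)(\ell+1)^2}\cdot\frac{1}{\ell^{2s}}.\]
Multiplying by $(1-\ell^{-2-2s})$ then produces exactly $G_\ell(s)$ as stated, establishing the factorisation $\sum_k b(k)/k^s = \zeta(s)\zeta(2+2s)G(s)/\zeta(2)^2$ on $\Re(s)>1$.

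For the analytic continuation of $G$, I would expand each Euler factor as $G_\ell(s)-1$ and bound the terms individually. The term $a_\ell \ell^{-s}$ with $a_\ell = 1/((\ell-1)(\ell+1)^2) = O(\ell^{-3})$ is summable for $\Re(s)>-2$; the term $b_\ell \ell^{-2s}$ with $b_\ell = \ell/((\ell-1)(\ell+1)^2)$ is only of size $\ell^{-2-2\Re(s)}$, which by itself would require $\Re(s)>-1/2$. The main obstacle is therefore showing the extra room down to $\Re(s)>-3/4$. This comes from the cancellation
\[b_\ell \ell^{-2s} - \ell^{-2-2s} = \frac{b_\ell-\ell^{-2}}{\ell^{2s}},\qquad b_\ell - \ell^{-2} = O(\ell^{-3}),\]
which is produced precisely by the $-\ell^{-2-2s}$ coming from the factor $(1-\ell^{-2-2s})$ inside $G_\ell(s)$. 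After this cancellation every remaining term in $G_\ell(s)-1$ is $O(\ell^{-3-\Re(s)} + \ell^{-3-2\Re(s)} + \ell^{-4-4\Re(s)})$, and the binding exponent $-4-4\Re(s)<-1$ gives exactly the region $\Re(s)>-3/4$. The product then converges locally uniformly there by the standard criterion, proving holomorphy.
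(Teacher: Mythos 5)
Your proof is correct and uses essentially the same approach as the paper: factoring out $1/\zeta(2)^2$, using multiplicativity to write the Dirichlet series of $b$ as an Euler product, peeling off the $\zeta(s)$ and $\zeta(2+2s)$ factors, and verifying holomorphy of $G$ by the cancellation of $\ell^{-2-2s}$ in the expansion of each local factor. The only stylistic difference is that the paper recognizes $\zeta(2)^2 b = 1 \ast g$ as a Dirichlet convolution with $g$ supported on $\ell, \ell^2$, which hands it the $\zeta(s)$ factor and a polynomial Euler factor immediately, whereas you compute the full Euler factor $E_\ell(s)$ (including the geometric tail from $\tilde b(\ell^q)$ for $q\geq 2$) and then multiply by $(1-\ell^{-s})$ to clear it; these amount to the same algebra. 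One small omission: your list of remaining terms after the cancellation leaves out $a_\ell \ell^{-2-3s}=O(\ell^{-5-3\sigma})$, but since this only requires $\sigma>-4/3$, which is weaker than $\sigma>-3/4$, the conclusion is unaffected.
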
\begin{proof} We see that 
\begin{align*}
b(k)&= \prod_{\ell\nmid k}
\left(1-\frac{1}{\ell^2}\right)^2
\prod_{\ell\| k}
\left(1-\frac{2}{\ell^2}+\frac{1}{\ell^3}\right)
\prod_{\ell^2\mid k}
\left(1-\frac{1}{\ell^2}\right)\\ 
&= \frac{1}{\zeta(2)^2}
\prod_{\ell\| k}
\left(1+ \frac{1}{(\ell-1)(\ell+1)^2}\right)
\prod_{\ell^2\mid k}
\left(1+\frac{1}{\ell^2-1}\right)  
=\frac{(1\ast g)(k)}{\zeta(2)^2},\end{align*} 
where $\ast$ is the Dirichlet convolution and 
for a prime $\ell$ we have $$g(\ell)=
\frac{1}{(\ell-1)(\ell+1)^2},
\ \ \ g(\ell^2)=\frac{\ell}{(\ell-1)(\ell+1)^2}, 
\ \ \ g(\ell^q)=0 \ \ \forall q\geq 3.$$
Hence, for $\Re(s)>1$ we have  
$$\sum_{k=1}^\infty 
\frac{b(k) }{k^s}=\frac{\zeta(s)}{\zeta(2)^2}
\prod_{\ell}\left(1+ \frac{1}{(\ell-1)(\ell+1)^2 \ell^s}
+\frac{\ell}{(\ell-1)(\ell+1)^2 \ell^{2s}} 
\right) .$$ 
One can verify that the right-hand 
side equals $ \frac{\zeta(s)}{\zeta(2)^2}
\zeta(2+2s) G(s)$, where $G$ is defined in 
the statement of the lemma.
Writing $\sigma=\Re(s), t=\Im(s)$
and fixing a prime $\ell$, 
we expand out the product of the two $\ell$-adic factors in the definition of $G$. 
We obtain an expression in which 
$\ell^{-2-2s}$ cancels out and we are left
with an expression of type $1+
s_1\ell^{-f_1}+\ldots+s_n \ell^{-f_n}$, where 
$n$ is finite and independent of $\ell$,
the constants 
$s_i$ are signs in $\{1,-1\}$ and $f_i$ are   functions of $s$ that satisfy 
$|f_i|>1$ when $\sigma>-3/4$. In particular, 
 $G$ is holomorphic in the region $\Re(s)>-3/4$.  \end{proof}
Using $b(m-n)=c(m,n)$, 
Lemma \ref{eq:integral2}  with $f=b$ and $\theta=1+1/\log x$
and Lemma \ref{lem:localanalysis}
we get  \begin{equation}
\label{eq:perron bira}
\sum_{1\leq n\neq m\leq x } c(m,n)
=\frac{1}{\pi i \zeta(2)^2 }
\int_{1+1/\log x-iT}^{1+1/\log x+iT} 
\zeta(s)\zeta(2+2s) 
G(s)\frac{x^{s+1} \mathrm ds }{s(s+1) }
+O\left(\frac{x^{2} \log x}{T^2 } \right)
\end{equation}
for   $T\leq x$  and  $x\geq 2$.
The next result gives an asymptotic expansion for the expression above. We remark that  the calculation for $\gamma_1$ will involve $\zeta(0)=-1/2$,  the minus sign  of which 
cancels   the diagonal contribution, as mentioned in
the introduction.
\begin{lemma}\label{lem:perronapplication} 
For any $\delta\in(0,1/4)$,
any integer $x\geq 2$ and $T\leq x$ we have
$$
\sum_{1\leq n\neq m\leq x } c(m,n)
=\gamma_2 x^2 +\gamma_1 x +\gamma_0 x^{1/2} 
+O\left(|J_1|+|J_2|+|J_3|+\frac{x^{2} \log x}{T^2 } \right)$$ with an absolute implied constant,
where $\gamma_0$ is as in \eqref{Leo_Artemisia}
and $$\gamma_2 :=   
\prod_{\substack{\ell \ \mathrm{ prime} \\ \ell=2}}^\infty
\left(1+ \frac{2}{(\ell-1)(\ell+1)^2 \ell}
\right)
\ \ \ \textrm{ and } \ \ \ 
\gamma_1:=-\frac{1}{\zeta(2)}.$$ Here $J_i$ are   three  
integrals as in the right-hand side of 
\eqref{eq:perron bira} 
resulting by replacing the range of integration respectively by 
\begin{align*}
&\left[-\frac{1}{2}-\delta+iT,1+\frac{1}{\log x}+iT\right]
,
\left[-\frac{1}{2}-\delta-iT,-\frac{1}{2}-\delta+iT\right]
,
\left[-\frac{1}{2}-\delta-iT,1+\frac{1}{\log x}-iT\right]
.\end{align*}
\end{lemma}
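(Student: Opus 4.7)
The plan is to apply the residue theorem to shift the contour in \eqref{eq:perron bira} from $\Re(s)=1+1/\log x$ to $\Re(s)=-\tfrac12-\delta$. By Lemma \ref{lem:localanalysis}, $G$ is holomorphic for $\Re(s)>-3/4$, and since $\delta<\tfrac14$ the integrand $F(s):=\zeta(s)\zeta(2+2s)G(s)\,x^{s+1}/(s(s+1))$ is meromorphic on the rectangle with vertical sides $\Re(s)=-\tfrac12-\delta$ and $\Re(s)=1+1/\log x$, cut at height $\pm T$. Its only poles inside are $s=1$ (from $\zeta(s)$), $s=0$ (from $1/s$), and $s=-\tfrac12$ (from $\zeta(2+2s)$); the pole of $1/(s+1)$ at $s=-1$ lies outside the shifted strip. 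The residue theorem then rewrites the contour integral as $2\pi i$ times the sum of the three residues plus $\pm J_1\pm J_2\pm J_3$, and the signs are irrelevant for the stated error term.

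The argument then reduces to three residue computations, combined with the prefactor $1/(\pi i\zeta(2)^2)$ of \eqref{eq:perron bira}. At $s=1$ I would use $\mathrm{Res}_{s=1}\zeta(s)=1$ so that the residue of $F$ is $\zeta(4)G(1)x^2/2$; the telescoping $\zeta(4)\prod_\ell(1-\ell^{-4})=1$ then expresses $\zeta(4)G(1)$ as the Euler product defining $\gamma_2$. At $s=0$, using $\zeta(0)=-1/2$, the residue of $F$ is $-\zeta(2)G(0)x/2$, and the local collapse
$$G(0)=\prod_\ell (1-\ell^{-2})\Bigl(1+\tfrac{1}{(\ell-1)(\ell+1)}\Bigr)=\prod_\ell (1-\ell^{-2})\cdot\tfrac{\ell^2}{\ell^2-1}=1$$
gives the contribution $-x/\zeta(2)=\gamma_1 x$; the minus sign is exactly the one advertised in the introduction as cancelling the diagonal contribution. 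At $s=-\tfrac12$, $\zeta(2+2s)$ has residue $1/2$ and $1/(s(s+1))|_{s=-1/2}=-4$, so the residue of $F$ is $-2\zeta(-\tfrac12)G(-\tfrac12)x^{1/2}$, and the Euler-product identity
$$G(-\tfrac12)=\prod_\ell\Bigl(1-\frac{2-\ell^{-1/2}+\ell^{-1}}{(\ell+1)^2}\Bigr)$$
produces the $\gamma_0 x^{1/2}$ constant as in \eqref{Leo_Artemisia}.

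The main obstacle is the verification of these Euler-product identities, especially at $s=-\tfrac12$, where the half-integer exponents make the cancellation less transparent than at $s=0$ or $s=1$; in practice this just amounts to multiplying each $\ell$-factor through by $\ell$ to clear denominators and matching the resulting polynomials. Everything else is immediate: holomorphy and the location of the poles follow from Lemma \ref{lem:localanalysis} and the known poles of $\zeta$, and the three side integrals $J_1,J_2,J_3$ are deliberately left as black boxes here, to be controlled in a subsequent step using bounds for $|\zeta|$ on the critical strip together with the cancellation in $x^{s}$ described in Remark \ref{rem:integration}.
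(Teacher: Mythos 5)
Your proof takes exactly the route of the paper: shift the contour from $\Re(s)=1+1/\log x$ to $\Re(s)=-\tfrac12-\delta$, invoke the holomorphy of $G$ in $\Re(s)>-3/4$ from Lemma~\ref{lem:localanalysis}, identify the poles at $s=1,0,-\tfrac12$ (with $s=-1$ excluded by $\delta<1/4$), and read off the three residues, leaving the horizontal and vertical boundary integrals $J_1,J_2,J_3$ as black boxes. The residue computations at $s=0$ and $s=-\tfrac12$, including the local collapses $G(0)=1$ and the $\ell$-factor identity at $s=-\tfrac12$, are carried out as in the paper.

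One point you should verify carefully concerns the $s=1$ residue. You correctly observe that $\zeta(4)G(1)=\prod_\ell\bigl(1+\tfrac{2}{(\ell-1)(\ell+1)^2\ell}\bigr)$, i.e.\ the Euler product that the lemma \emph{calls} $\gamma_2$. However, the residue $\zeta(4)G(1)x^2/2$ still has to be multiplied by the overall prefactor $2/\zeta(2)^2$ coming from $\tfrac{1}{\pi i\,\zeta(2)^2}\cdot 2\pi i$, just as you do for the $s=0$ and $s=-\tfrac12$ terms. So the actual coefficient of $x^2$ produced by the contour shift is $\zeta(4)G(1)/\zeta(2)^2=\prod_\ell(1-\ell^{-2})^2\bigl(1+\tfrac{2}{(\ell-1)(\ell+1)^2\ell}\bigr)$, which differs from the Euler product you wrote (and from the one printed in the lemma) by the factor $\zeta(2)^{-2}$. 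At $\ell=2$ the two products already disagree ($10/9$ versus $(9/16)\cdot(10/9)$), so this is not an identity in disguise. Your proposal would thus not establish the literal statement as written; you should carry the $\zeta(2)^{-2}$ through and state the $x^2$-coefficient as $\zeta(4)G(1)/\zeta(2)^2$. For context, this constant is the one that must cancel against the Ces\`aro/first-moment contribution in the final assembly of the theorem, and indeed it equals the constant $\gamma$ that arises there (once the $\ell$-adic factor in the definition of $\gamma$ is also computed carefully), so the $x^2$ terms still vanish in the end; but the bookkeeping of the $\zeta(2)^{-2}$ is not optional.

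Everything else — the identification of poles, the signs, the $\gamma_0$ and $\gamma_1$ computations, and deferring the estimation of $J_1,J_2,J_3$ — matches the paper's argument.
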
\begin{proof} 
By Cauchy's theorem the integral in the 
right-hand side of \eqref{eq:perron bira}
equals $$ 
\sum_{j\in \{1,0,-1/2\}}
\mathrm{Res}\left(\zeta(s)\zeta(2+2s) 
G(s)\frac{x^{s+1}  }{s(s+1) };s=j\right)
+J_1+J_2-J_3$$ since $\delta<1/4$ is small enough 
so that the pole at $s=-1$ is outside the box.
Here, $\mathrm{Res}(F;s=j)$ stands for the residue of $F$ at $s=j$. 
When $j=1$ 
the residue is $$\zeta(4) G(1)\frac{x^2  }{2
  }
=  \frac{x^2  }{2 }\zeta(2)^2
 \prod_{\substack{\ell \ \mathrm{ prime} \\ \ell=2}}^\infty\left( 1-\frac{2}{\ell^2}+
\frac{3-2/\ell}{\ell^3}
\right),$$by     the definition of $G$ in Lemma \ref{lem:localanalysis}. To obtain $\gamma_2$ this must be multiplied by
$2/\zeta(2)^2$ due to \eqref{lem:perronapplication}.
When $j=0$ the residue is 
$\zeta(0)\zeta(2) G(0)x =
-\zeta(2) x/2,$ since each $\ell$-adic factor in the definition of $G(0)$ in Lemma \ref{lem:localanalysis} equals $1$. Again, to obtain $\gamma_1$ we must multiply the residue by  
$2/\zeta(2)^2$. Finally, the definition of $G$ in   Lemma \ref{lem:localanalysis} 
gives \[ G(-1/2)=\prod_{\ell} 
\left(1-\frac
{(2  - \ell^{-1/2}+ \ell^{-1})}
{(\ell + 1)^2 }
\right).\] Hence, 
when $j=-1/2$ the residue is $$\zeta(-1/2)
\frac{1}{2}
G(-1/2)\frac{x^{1/2}  }{(-1/2)(1/2)}
=-2\zeta(-1/2)G(-1/2) x^{1/2}$$ and  one must multiply this $2/\zeta(2)^2$ to obtain $\gamma_0$. \end{proof}

\section{Perron integral}   
\label{s:peron}
\subsection{Zeta prerequisites}  
We recall some estimates
regarding the Riemann zeta function. The usual 
notation $\sigma=\Re(s), t=\Im(s)$ is used throughout this section.

The functional equation and bounds for the 
  Gamma function are used in~\cite[Equation (5.1.1)]{MR882550}  to show that  the following bound   holds
with an absolute implied constant: 
   \begin{equation}\label{eq:bnd1} 
\zeta(s)\ll |t|^{1/2-\sigma }  , \ \ \ 
-1\leq \sigma \leq -1/10. \end{equation}
When $\sigma$ is around $1$,
~\cite[Theorem 3.5]{MR882550}
states   the bound
   \begin{equation}\label{eq:bnd3} 
\zeta(s)\ll \log |t| , \ \ \ 
1-\frac{2}{\log |t|}\leq \sigma\leq 2,
|t|\geq 1 \end{equation}
with an absolute implied constant.

The functional equation reads 
$\zeta(s)=\chi(s) \zeta(1-s)$, where 
\[\chi(s)=2^s \pi^{s-1}\Gamma(1-s)
\sin\left(\frac{\pi s}{2}\right) .\] 
The bound $|\chi(s)| \ll |t|^{1/2 -\sigma}$   
holds uniformly  for   $\sigma \in (-1,1)$
by~\cite[page 95]{MR882550}.
Thus, by \eqref{eq:bnd3},\begin{equation}\label{eq:bnd7} 
   \zeta(s)\ll  |t|^{1/2-\sigma } \log |t|, \ \ \ 
 -1<\sigma\leq \frac{1}{\log |t|}.\end{equation}
  When $\sigma \in (0,1)$ 
the  case of the approximate functional 
equation~\cite[Theorem 4.13]{MR882550} 
corresponding to the choice of parameters 
$ x=y=\sqrt{|t|/(2\pi)}$  states that 
$\zeta(s)$ equals 
$$\sum_{n\leq x} n^{-s}+
\chi(s) \sum_{n\leq x} \frac{1}{n^{1-s}}
+O(x^{-\sigma} (\log |t|) + 
|t|^{1/2-\sigma} x^{\sigma-1})
\ll  \frac{|t|^{(1-\sigma)/2}}{1-\sigma}  
+ |\chi(s)| \frac{|t|^{\sigma/2}}{\sigma}
+ \frac{\log |t|}{ |t|^{\sigma/2} } 
.$$
Combining this with
$|\chi(s)| \ll |t|^{1/2 -\sigma}$ we get 
\begin{equation}\label{pergolesi:salustia}
\zeta(s)\ll \frac{ |t|^{\frac{1-\sigma }{2} } }
{\min\{\sigma,1-\sigma\} }+\log |t|, \ \ \ \sigma \in (0,1). \end{equation} 
The following mean value 
theorem is given in~\cite[Theorem 7.2]{MR882550}
with an absolute implied constant:
\begin{equation}\label{eq:bnd2} 
\int_1^T |\zeta(\sigma+it)|^2\mathrm dt
\ll T\min\left\{\log T, \frac{1}{\sigma-1/2}\right\} , \ \ \ 
1/2\leq \sigma \leq 2. \end{equation}

From now on we assume that\begin{equation}\label{eq:mvesp}x^{3/4}\leq T\leq x. 
\end{equation}
This ensures that the term $x^2/T^2$ in Lemma \ref{lem:perronapplication}  is $\leq x^{1/2}$.

\subsection{Vertical integral}
Writing $s=-1/2-\delta+it$ we can use
 \eqref{eq:bnd1}   to get 
$$J_2\ll \int_{-T}^T 
t^{1+\delta  } |\zeta(1-2\delta+ it)|
\frac{x^{1/2-\delta}\mathrm dt}{1+t^2} 
\ll
x^{1/2}(T/x)^\delta 
\int_1^T     \frac{|\zeta(1-2\delta+ it)|}{t }
\mathrm dt.$$  Since $\delta<1/4$ we have 
$1-2\delta>1/2$, hence, \eqref{eq:bnd2} 
gives 
$\int_z^{2z}|\zeta(1-2\delta+ it)|^2 
\mathrm dt \ll z$, with an   implied 
constant depending only on $\delta$. Therefore, 
\begin{align*}
\int_1^T \frac{|\zeta(1-2\delta+ it )| }{t}\mathrm d t
&\leq 
\sum_{0\leq j \leq \log T}\mathrm e^{-j}
\int_{\mathrm e^j}^
{\mathrm e^{j+1}}
|\zeta(1-2\delta+ it )| \mathrm d t
\\
&\ll  
\sum_{0\leq j \leq \log T}\mathrm e^{-j/2}
\left(\int_{\mathrm e^j}^
{\mathrm e^{j+1}}
|\zeta(1-2\delta+ it )|^2 \mathrm d t
\right)^{1/2}
\ll  \log T.
 \end{align*} Thus, for all 
 $\delta \in (0,1/4)$  
 the following holds with an absolute implied 
 constant: 
 \begin{equation}
 \label{eq:JJJ222}J_2\ll x^{1/2}(T/x)^\delta \log T.
 \end{equation}

\subsection{Horizontal integral: near $1$} 
We   focus on   $J_1$ since 
$J_3$  is   treated similarly.
We have    
$$J_1=  \int_{-\frac{1}{2}-\delta}^{1+\frac{1}{\log x}} 
 \zeta(\sigma+ iT )
\zeta(2\sigma+2+ iT )
\frac{G(\sigma+ iT )x^{1+\sigma+ iT}\mathrm d \sigma}{(\sigma+ iT )(\sigma+1+ iT )}.$$ 
The two different zeta factors 
contribute in different regions and it is thus necessary to consider many different intervals.
We write  \begin{align*}
\begin{alignedat}{4}
I_1 &:= \left[ 1-\frac{1}{\log x},\, 1+\frac{1}{\log x} \right],
&\qquad
I_2 &:= \left[ \frac{1}{\log T},\, 1-\frac{1}{\log T} \right], \\
I_3 &:= \left[ -\frac{1}{2}-\frac{1}{\log T},\, \frac{1}{\log T} \right],
&\qquad
I_4 &:= \left[ \frac{1}{2}-\delta,\, \frac{1}{2}-\frac{1}{\log T} \right] \end{alignedat}
\end{align*} and denote the corresponding 
subintegral of $J_1$ by $J_{1k}$ for $k=1,\ldots,4$.

By \eqref{eq:bnd3} we have    \begin{equation}
 \label{eq:mjikna8} J_{11} \ll 
\int_{ 1-1/\log T}^{1-1/\log x}
(\log T) \frac{x^{1+\sigma}
\mathrm d \sigma}{T^2}
\ll   \frac{x^2}{T^2}
. \end{equation}

\subsection{Horizontal integral: positive}\label{s:hrzntl}
By \eqref{pergolesi:salustia} we obtain 
$$ J_{12}\ll  (\log T)
T^{1/2-2} x
\int_{1/\log T}^{1-\frac{1}{\log T}}  
T^{\frac{-\sigma }{2} }
x^{\sigma}\mathrm d \sigma
\ll \frac{ x(\log T) }{T^{3/2}} 
\int_{1/\log T}^{1-1/\log T} 
(x  T^{-1/2} )^\sigma 
 \mathrm d \sigma.$$
 Since $T\leq x$
the integral is $\ll (x  T^{-1/2} )^{1-1/\log x}
 \ll x  T^{-1/2} $. Hence,  
   \begin{equation}
 \label{eq:porpora amato nome} J_{12}   
 \ll \frac{ x^2 }{T^{2}} \log T. \end{equation}   
\subsection{Horizontal integral: negative part}
By \eqref{eq:bnd7} we obtain  
 $$ J_{13}\ll 
  \frac{x \log T}{T^{3/2} } 
\int_{-\frac{1}{2}-\frac{1}{\log T}}^{\frac{1}{\log T}}   
(x/T)^{ \sigma }  |\zeta(2\sigma+2+ 2iT ) |
\mathrm d \sigma.$$ The estimate 
$|\zeta(2\sigma+2+ 2iT ) |\ll \log T$
follows from \eqref{eq:bnd3}. It yields the bound 
\begin{equation}\label{eq:jj1313}
J_{13}\ll 
  \frac{x (\log T)^2}{T^{3/2} } 
  \int_{-\frac{1}{2}-\frac{1}{\log T}}^{\frac{1}{\log T}} 
(x/T)^{  \sigma }   \mathrm d \sigma
  \ll
  \frac{  x ^{1+1/\log T}}{T^{3/2} }   (\log T)^2 
 \ll
  \frac{  x }{T^{3/2} }   (\log T)^2   \ll
  \frac{  x^2 }{T^{2} }   (\log T)^2 
  ,\end{equation}
  where the two last bounds follow from \eqref{eq:mvesp}. 
 \subsection{Horizontal integral: near $-1/2$}
 \label{s:fnlaly}
By \eqref{eq:bnd7} we get   
 $$ J_{14}\ll 
  \frac{x \log T}{T^{3/2} } 
  \int_{-\frac{1}{2}-\delta}^
  {-\frac{1}{2}-\frac{1}{\log T}} 
(x/T)^{ \sigma }  |\zeta(2\sigma+2+ 2iT ) |
\mathrm d \sigma.$$  Employing~\eqref{pergolesi:salustia} with $s$ replaced by $2s+2$ yields the  bound 
  $$ \zeta(2\sigma+2+ 2iT )\ll 
T^{\frac{-1-2\sigma }{2}} \log T$$ uniformly in the range of the integral. Hence,  by \eqref{eq:mvesp} we get 
\begin{equation}\label{eq:j1414}
J_{14}\ll 
  \frac{x (\log T)^2}{T^2} 
  \int_{-\frac{1}{2}-\delta }
  ^{-\frac{1}{2}-\frac{1}{\log T}} 
   (x/T^2)^{ \sigma } 
\mathrm d \sigma 
\ll   \frac{x^{1/2}  (\log T)^2}{T}
\left (  \frac{T^2}{x}\right)^{\delta}
\ll   x^{1/2}(T/x)^\delta (\log T)^2 
,\end{equation} where the last bound is from 
$\delta<1$.
Putting together 
\eqref{eq:JJJ222}-\eqref{eq:j1414} 
shows that the   error term in 
Lemma \ref{lem:perronapplication}
is 
$$\ll 
\left(
\frac{x^{2} }{T^2 } + x^{1/2}(T/x)^\delta
\right)(\log x)^2
.$$ 
Solving $x^2T^{-2}=x^{1/2}(T/x)^\delta$ yields
$T=x^{\omega}$ with $\omega=(\delta+3/2)/(\delta+2)$.
Substituing back we get the error term 
$$\ll  
x^{2-\omega} (\log x)^2=x^{1/(\delta+2)} (\log x)^2
.$$ Take $\delta=1/4-\epsilon'$ for a fixed   $\epsilon'\in (0,1/4)$  so that, 
for any  fixed $\epsilon>0$, the above becomes 
$O_\epsilon(x^{\epsilon+4/9})$.
Hence, by Lemma \ref{lem:perronapplication} 
we infer that for each fixed $\epsilon>0$ 
and any integer $x\geq 2$ one has 
\begin{equation}\label{eq:brokenglas}
\sum_{1\leq n\neq m\leq x } c(m,n)
=\gamma_2 x^2 +\gamma_1 x +\gamma_0 x^{1/2} 
+O_\epsilon(x^{4/9+\epsilon} ).
\end{equation} 
\begin{remark}\label{rem:nolindelof} 
Let us see why the Lindel\"of hypothesis, or bounds towards it,
do not help. These   can be used to 
bound  $\zeta(s)$ in  $J_{12}$
or   $\zeta(2s+2)$ in $J_{14}$.
The convexity bound \eqref{pergolesi:salustia}
used in \S\ref{s:hrzntl}   produces the bound
\eqref{eq:jj1313} that is not worse than the one   already present 
 in Lemma \ref{lem:perronapplication}. Assuming a subconexity bound, i.e. that 
 there is $\beta\in [0,1/4)$ such that 
 $\zeta(1/2+it)=O(|t|^{\beta+\epsilon})$
for $\epsilon>0$ and $|t|\geq 1$
implies by the Phragm\'en--Lindel\"of principle
that  
$\zeta(\sigma+it)=O(|t|^{2\beta(1-\sigma)+\epsilon})$
uniformly in the range $|t|\geq 1$ and $1/2\leq \sigma \leq 1$. 
Using this bound instead of~\eqref{pergolesi:salustia} in \S\ref{s:fnlaly} leads to 
$$ J_{14}\ll x^{1/2}(T/x)^\delta (T^{4\beta\delta-1}) x^\epsilon.$$
Save for  the harmless term $x^\epsilon$, this bound is not worse 
than the one already present in \eqref{eq:JJJ222}
due to   $\beta<1/4$ and $\delta\leq 1/4$.
\end{remark}

\section{Conclusion of the proof} 
Summing the asymptotic of 
Lemma~\ref{lem:morte d'abel avondano}
over all $1\leq n\neq m\leq x$ and using~\eqref{eq:brokenglas} 
to estimate the main term  leads to 
\begin{align*}
\sum_{n\neq m \leq x}
\sum_{P\in \c P_d(H)}
\mu^2_{\mathrm{model}}(P(n))
\mu^2_{\mathrm{model}}(P(m))
&=(2H)^{d+1} \left(
\gamma_2 x^2 +\gamma_1 x +\gamma_0 x^{1/2} +O_\epsilon(x^{\epsilon+4/9})\right)\\
&+ O\left(x^2
z^{4d+2}+x^2z^2H^d+
\frac{x^2H^{d+1} \log z}{z}
\right). \end{align*} Noting that $\gamma=\gamma_2$
and alluding to     
Lemmas~\ref{lem:cutdown} and~\ref{lem:middle}
we get
$$V(H,x,z)=(2H)^{d+1}  
 \gamma_0 x^{1/2} +O(H^\epsilon\c R),$$ 
 with $
\c R= H^{d+1} x^{ 4/9} +x^{2+d/2} H^{d+1/2} 
+x^2 H^{d+1}/z
+x^2z^{4d+2}
 +x^2z^2H^d  
.$  We take $z=H^\lambda$ with $\lambda=(d+1)/(4d+3)$.
This choice   is   optimal   
for minimising the last three terms.
We obtain  $\c R\ll  H^{d+1} x^{ 4/9} 
+x^{2+d/2} H^{d+1/2} 
+x^2 H^{d+1-\lambda}
$. We infer   that for $d\geq 3$ one has 
$$
x   \leq H^{\frac{1}{d+3}}
\ \ \ \Rightarrow \ \ \ 
\c R \ll H^{d+1}  x^{1/2} 
\ \ \textrm{ and } \ \ 
x \leq H^{ \frac{1}{d+28/9} }  
\ \ \Rightarrow \ \ 
\c R \ll H^{d+1} x^{4/9} 
.$$ These respectively prove the first and second part of 
Theorem \ref{mmm:ttt} 
by using
Lemma \ref{lem:initialsplit} 
and noting that the error term is 
$$
\ll V(H,x,z)^{1/2} x
\left( \frac{H^{d+1+\epsilon}}{z^{2-\epsilon}} \right)^{1/2}
+  x ^2  \frac{H^{d+1+\epsilon}}{z^{2-\epsilon}} 
\ll x^{5/4} H^{d+1+2\epsilon} z^{-1} 
+  x ^2 H^{d+1+2\epsilon} z^{-1}
,$$ which is   admissible.

\end{document}